\newcommand{\N}{\mathbb{N}}			
\newcommand{\R}{\mathbb{R}}			
\newcommand{\Z}{\mathbb{Z}}			
\newcommand{\Prob}{\mathbb{P}}		
\newcommand{\ahalf}{\tfrac{1}{2}}	
\newcommand{\Cmax}{C_{\text{max}}}	
\newcommand{\Cle}{C_{\text{le}}}	
\numberwithin{equation}{section}	
\newtheorem{theorem}{Theorem}[section]
\newtheorem{corollary}[theorem]{Corollary}
\newtheorem{definition}[theorem]{Definition}
\newenvironment{proof}[1][x]
	{\par\vskip\topsep\noindent
	 \ifx#1x{\it Proof.}\else{\it #1.}\fi
	 \hskip 5pt plus1pt minus1pt}
	{\hfill$\Box$\par\vskip\topsep}
\title{Pattern theorems, ratio limit theorems and Gumbel maximal clusters for 
random fields}
\author{Remco van der Hofstad\thanks{Dept of Mathematics and Computer Science, 
Eindhoven University of Technology, P.O. Box 513, 5600\,MB Eindhoven, The 
Netherlands. E-mail: {\tt rhofstad@win.tue.nl}}
\and Wouter Kager\thanks{EURANDOM, P.O.~Box 513, 5600\,MB Eindhoven, The 
Netherlands. E-mail: {\tt kager@eurandom.tue.nl}}}
\begin{document}

\maketitle
\begin{abstract}
	We study occurrences of patterns on clusters of size~$n$ in random fields 
	on~$\Z^d$. We prove that for a given pattern, there is a constant $a>0$ 
	such that the probability that this pattern occurs at most $an$ times on a 
	cluster of size~$n$ is exponentially small. Moreover, for random fields 
	obeying a certain Markov property, we show that the ratio between the 
	numbers of occurrences of two distinct patterns on a cluster is 
	concentrated around a constant value.  This leads to an elegant and simple 
	proof of the ratio limit theorem for these random fields, which states 
	that the ratio of the probabilities that the cluster of the origin has 
	sizes $n+1$ and~$n$ converges as $n\to\infty$. Implications for the 
	maximal cluster in a finite box are discussed.
\end{abstract}

\section{Introduction and main results}
\label{sec:introduction}

We consider random fields on the lattice~$\Z^d$ in dimensions $d\geq2$, with a 
finite state space $S=\{0,1,\dots,q-1\}$ per site for some $q\geq2$. Thus, the 
configuration space for the models studied here is $\Omega = S^{\Z^d}$. This 
space is endowed with a probability measure~$\Prob$, which we assume to be 
translation-invariant. The results in this paper hold under different further 
conditions on the measure~$\Prob$, which we will define and discuss first. To 
do so, for any configuration $\omega \in \Omega$ and any $V \subset \Z^d$, we 
will write $\omega_V$ for the configuration restricted to the set~$V$, that 
is, $\omega_V$ is considered to be an element of $S^V$.

\begin{definition}[Finite-energy property]
	We say that the measure~$\Prob$ has the \emph{finite-energy property} if 
	there exists an $h\in(0,1)$ such that for all states $s\in S$,
	\begin{equation}
	\label{finiteE}
	\begin{split}
		h
		&\leq \inf_{\sigma \in \Omega} \Prob\bigl( \omega(x)=s \bigm| 
		\omega_{\Z^d \setminus \{x\}} = \sigma_{\Z^d \setminus \{x\}} \bigr) 
		\\
		&\leq \sup_{\sigma \in \Omega} \Prob\bigl( \omega(x)=s \bigm|
		\omega_{\Z^d \setminus \{x\}} = \sigma_{\Z^d \setminus \{x\}} \bigr)
		\leq 1-h.
	\end{split}
	\end{equation}
\end{definition}

\begin{definition}[Markov property]
	We say that the measure~$\Prob$ has the \emph{Markov property} if the 
	state of a site $x\in\Z^d$ depends only on the states of its nearest 
	neighbours in~$\Z^d$ and not on the rest of the field, that is, if for all 
	$x\in\Z^d$ and $s\in S$,
	\begin{equation}
	\label{Markov}
		\Prob\bigl( \omega(x) = s \bigm| \omega_{\Z^d \setminus \{x\}} \bigr)
		= \Prob\bigl( \omega(x) = s \bigm| \omega_{N_x} \bigr),
	\end{equation}
	where $N_x = \{ y\in\Z^d : |y-x| = 1 \}$ is the set of neighbours of~$x$.
\end{definition}

Observe that if the random field is Markovian, then by translation invariance 
and~\eqref{Markov} it has the finite-energy property~\eqref{finiteE} if and 
only if for a given vertex~$x$, each state~$s$ has strictly positive 
probability regardless of the states of the neighbours of~$x$. The Markov 
property implies the following ``boundary-$s$ Markov property'' for every 
state $s\in S$, which will actually be sufficient for our purposes:

\begin{definition}[Boundary-$s$ Markov property]
	For a finite $V\subset\Z^d$, write $N_V = \bigcup_{x\in V} (N_x\setminus 
	V)$ for the set of neighbours of~$V$. We say that~$\Prob$ has the 
	\emph{boundary-$s$ Markov property} if for every finite $V\subset\Z^d$, 
	given that all sites of~$N_V$ are in the state $s\in S$, the configuration 
	on~$V$ is conditionally independent of the configuration on $\Z^d\setminus 
	(V\cup N_V)$.
\end{definition}

A special case of the boundary-$s$ Markov property is the so-called 
\emph{empty-boundary Markov property} as defined in~\cite{Grim06}, which is 
the boundary-$0$ Markov property in our terminology.

Note that in the discussion so far, we have considered Markovian properties 
under nearest-neighbour dependencies only. Indeed, in the conditional 
probabilities~\eqref{Markov} we only consider nearest neighbours of~$x$, and 
in our formulation of the boundary-$s$ Markov property, it suffices that the 
nearest neighbours of~$V$ are in the state~$s$ to have independence 
between~$V$ and the outside world. It is not difficult, however, to extend the 
methods in this paper to random fields with more general dependencies between 
sites, as long as these dependencies have finite range.

As a typical example of the kind of random fields we want to study, we 
consider site percolation. In this case, $q=2$ and for given $0<p<1$, we take 
the measure~$\Prob$ to be the Bernoulli product measure $\Prob_p$ such that 
$\Prob_p(\omega(x) = 1) = p$ and $\Prob_p(\omega(x) = 0) = 1-p$ for each 
$x\in\Z^d$. Since the states of different sites are independent in this case, 
it is clear that~\eqref{finiteE} holds with $h = \min(p, 1-p)$, and the field 
is obviously Markovian as well. For a given percolation configuration $\omega 
\in \Omega$, we say that a site $x\in\Z^d$ is \emph{occupied} if $\omega(x) = 
1$ and \emph{vacant} if $\omega(x) = 0$. For convenience, the same terminology 
will be used for the states $0$ and~$1$ of any random field. This does not 
attach any special meaning to the states $0$ and~$1$, since one can always 
study the random fields obtained by permuting the states of~$S$.

Using the terminology introduced above, we write $C(x) = C(x,\omega)$ for the 
occupied cluster of the site $x \in \Z^d$. That is, $C(x)$ is the set of 
occupied sites (i.e., sites in state~$1$) that can be connected to~$x$ by a 
nearest-neighbour path passing only through occupied sites. By convention, we 
set $C(x)= \varnothing$ if $\omega(x) \neq1$ and we write $C = C(0)$ for the 
cluster of the origin. In general, if $X \subset \Z^d$, then the number of 
sites in~$X$ is denoted by $|X|$. In particular, the number of sites in the 
occupied cluster of the origin will be denoted by~$|C|$. The results in this 
paper hold under the assumption that the distribution of $|C|$ has an 
exponential or stretched exponential tail.

\begin{definition}[Exponential tail]
	We say that the cluster-size distribution has an \emph{exponential tail} 
	if the limit
	\begin{equation}
	\label{limit}
		\mu = \lim_{n\to\infty} \big[ \Prob(|C|=n) \big]^{1/n} = 
		\lim_{n\to\infty} \big[ \Prob(n \leq |C| < \infty) \big]^{1/n}
	\end{equation}
	exists for some $0<\mu\leq1$.
\end{definition}

\begin{definition}[Stretched exponential tail]
	We say that the cluster-size distribution has a \emph{stretched 
	exponential tail with exponent $\beta\in(0,1)$} if the limit
	\begin{equation}
	\label{superlimit}
		\nu = \lim_{n\to\infty} \big[ \Prob(|C|=n) \big]^{1/n^\beta} = 
		\lim_{n\to\infty} \big[ \Prob(n \leq |C| < \infty) \big]^{1/n^\beta}
	\end{equation}
	exists for some $0<\nu<1$.
\end{definition}

We note that, for instance, for subcritical percolation the cluster of the 
origin has an exponential tail with~$\mu$ strictly between $0$ and~$1$ 
\cite[Theorems 6.78, 8.61, 8.65]{Grim99}. The result~\eqref{limit} also holds 
for critical and supercritical percolation, but with $\mu=1$, which indicates 
that the distribution decays slower than exponentially. For supercritical 
percolation, it is in fact known that $\Prob(|C|=n)$ decays like a stretched 
exponential with $\beta=(d-1)/d$. This result appears in~\cite{ACC90,Cerf99} 
for $d=2$, in~\cite{Cerf00} for $d=3$, and in~\cite{Cerf06} for $d\geq4$.

It is believed that the behaviour of the cluster-size distribution described 
above is rather typical. That is, \eqref{limit} is expected to be true quite 
generally for random fields in the non-percolating regime, i.e., when all 
clusters are finite. This result follows for instance if one can show that 
there exists a constant $A>0$ such that
\begin{equation}
\label{supermulti}
	\frac{\Prob(|C|=n+m)}{n+m} \geq A \, \frac{\Prob(|C|=n)}n \, 
	\frac{\Prob(|C|=m)}m \quad \text{for all }n,m\geq1.
\end{equation}
See~\cite[p.~91]{KS78} for an example of this supermultiplicativity result in 
the case of percolation, which is reproduced in~\cite[Lemma~6.102]{Grim99}.  
Furthermore, it is believed that~\eqref{superlimit} holds quite generally in 
the percolating regime with an exponent $\beta = (d-1)/d$. 

An example of a binary random field which does not satisfy the Markov property 
but does satisfy the boundary-$s$ Markov property both for $s=0$ and for 
$s=1$, is the random-cluster model~\cite{Grim06}. Although this model is 
defined in terms of edge occupation statuses rather than site occupation 
statuses (as considered here), it is not difficult to adapt our methods for 
the random-cluster model. Note that for the random-cluster model, the 
exponential decay in~\eqref{limit} can be shown by adapting the proof of 
Lemma~6.12 in~\cite{Grim99}, see the corrected version of Theorem~5.47 
in~\cite{Grim06}. Theorem~5.47 in~\cite{Grim06} is stated in the more general 
setting of finite-energy FKG measures satisfying the empty-boundary (i.e., 
boundary-$0$) Markov property. An important open problem for the 
random-cluster model is whether $\mu<1$ in the subcritical regime (see e.g.\ 
\cite[Conjecture~5.54]{Grim06}).

\subsection{Pattern theorems}
\label{ssec:pattern}

Throughout this paper, we will use~$Q$ to denote the cube of diameter~$r$ at 
the origin,
\begin{equation}
	Q = \{ x\in\Z^d : 0\leq x_i< r \text{ for all}~i = 1,2,\ldots,d \},
\end{equation}
where the diameter $r>0$ is considered to be fixed once and for all. The 
\emph{extended cube}~$\overline{Q}$ is obtained by extending~$Q$ by~$1$ unit 
in all directions, that is,
\begin{equation}
	\overline{Q} = \{ x\in\Z^d : -1\leq x_i< r+1 \text{ for all}~i = 
	1,2,\ldots,d \}.
\end{equation}
The \emph{boundary} of the cube~$Q$ is defined as $\partial Q = \overline{Q} 
\setminus Q$. Likewise, the boundary of~$\overline{Q}$ is defined as 
$\partial\overline{Q} = \overline{ \overline{Q} } \setminus \overline{Q}$, 
where, as before, $\overline{ \overline{Q} }$ is obtained by 
extending~$\overline{Q}$ by~$1$ unit in all directions. While~$Q$ always 
denotes the cube at the origin, we will write~$Q_x$ to denote the cube at the 
site~$x$, that is, $Q_x = \{x+y : y \in Q\}$. The boundary of  this cube, the 
extended cube at~$x$, and its boundary are defined analogously as $\partial 
Q_x = \{x+y : y \in \partial Q\}$, $\overline{Q}_x = \{x+y : y \in 
\overline{Q}\}$ and $\partial \overline{Q}_x = \{x+y : y \in \partial 
\overline{Q}\}$.

\begin{definition}[Pattern]
	A \emph{pattern}~$P$ is a prescribed configuration of the states of the 
	sites in the cube~$Q$, that is, $P = (P(x) : x\in Q)$ is an element of 
	$S^Q$.
\end{definition}

\begin{figure}
	\centering
	\begin{picture}(191,125)
	 \put(0,0){\includegraphics{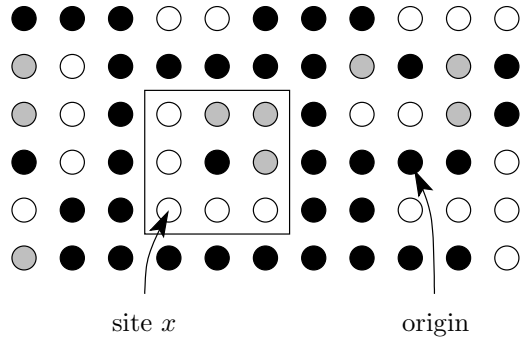}}
	 \put(50,2){\makebox(0,0)[b]{\smash{site $x$}}}
	 \put(159,2){\makebox(0,0)[b]{\smash{origin}}}
	\end{picture}
	\caption{\label{fig:PatternOnC}A piece of a three-state ($q=3$) 
	random-field configuration on~$\Z^2$, with sites in states $0$, $1$ 
	and~$2$ depicted in white, black and gray, respectively. The square 
	highlights a pattern of diameter $r=3$ occurring on the occupied cluster 
	of the origin at the site~$x$.}
\end{figure}

Suppose that~$P$ is a pattern. Then for a given configuration $\omega \in 
\Omega$, we say that $P$ \emph{occurs at the site~$x$} if $\omega(x+y) = P(y)$ 
for all $y \in Q$. We say that~$P$ occurs at~$x$ \emph{on~$C$} if~$P$ occurs 
at~$x$ and $\partial Q_x \subset C$. Thus, in our terminology, a pattern can 
only occur on the cluster of the origin if it is completely surrounded by this 
cluster. See Figure~\ref{fig:PatternOnC} for an illustration. The reason for 
defining occurrences of patterns on~$C$ in this way, is to guarantee that a 
pattern will always contribute the same number of sites to~$C$ whenever it 
occurs on~$C$. This fact is crucial for the two-pattern theorems stated below.  
However, the standard pattern theorem (Theorem~\ref{thm:pattern} below) still  
holds if we weaken our definition of an occurrence on~$C$, for instance if we  
say that a pattern~$P$ occurs at~$x$ on~$C$ if $P$ occurs at~$x$ and $\partial 
Q_x \cap C \neq \varnothing$. This is obvious, because there must be more 
occurrences of~$P$ on~$C$ under the weaker definition.

The standard pattern theorem states that for a given pattern~$P$, if the 
cluster of the origin has size~$n$, then, for some $a>0$ sufficiently small, 
it is very unlikely that~$P$ occurs less than~$an$ times on this cluster. This 
statement is true whether or not one allows occurrences of patterns to 
overlap. However, in this paper we will also study the ratio between the 
numbers of occurrences of two distinct patterns, and for these results it is 
important that patterns cannot overlap. We will avoid this by imposing a 
condition on which occurrences of a pattern are counted. Let us now explain 
this in more detail.

First of all, we note that there are patterns $P$ and~$P'$ for which overlaps 
are ruled out by definition. If that is the case, we do not have to impose 
restrictions on which occurrences of the patterns we count for the two-pattern 
theorems (Theorems \ref{thm:twopattern}  and~\ref{thm:strongtwopattern}) to 
hold. For generality, however, we will choose to consider only occurrences of 
patterns at sites of the grid $V := (r+2)\Z^d + (1,1,\ldots,1)$. That is, we 
denote by $N_P = N_P(\omega)$ the total number of occurrences of the 
pattern~$P$ on~$C$ at distinct sites of~$V$, and we will study~$N_P$ rather 
than the total number of distinct occurrences of~$P$ on~$C$. To explain our 
choice of~$V$, we note that later on, we are going to replace occurrences 
of~$P$ at sites of~$V$ by occurrences of~$P'$, and we want to guarantee that 
this does not change the state of the origin from occupied to non-occupied.  
This is why $V$ has been chosen such that the origin does not belong to~$Q_x$ 
for any $x \in V$.

Henceforth, we will write $\Prob_n$ for the measure $\Prob$ conditioned on the 
event $\{|C|=n\}$, that is, $\Prob_n( \,\cdot\, ) := \Prob\bigl( \,\cdot\, 
\bigm| |C|=n \bigr)$. In Section~\ref{sec:pattern} we will prove the following 
result, which is slightly different from the pattern theorem for lattice 
clusters appearing in~\cite{Madr99} because of the way we have defined~$N_P$:

\begin{theorem}[Pattern theorem]
\label{thm:pattern}
	Suppose that~$\Prob$ has the finite-energy property~\eqref{finiteE} and 
	that the cluster-size distribution satisfies~\eqref{limit}. Let~$P$ be a 
	pattern.  Then there exists an $a>0$ such that
	\[
		\limsup_{n\to\infty} \big[ \Prob_n(N_P \leq an) \big]^{1/n} < 1.
	\]
\end{theorem}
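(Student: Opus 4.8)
The plan is to prove this by a contradiction/counting argument of the kind familiar from the Kesten pattern theorem for self-avoiding walks, adapted to the random-field setting via the finite-energy property. The core idea: if a pattern $P$ occurs too few times on a typical cluster of size $n$, then there must be many sites of $V$ inside the cluster where $P$ \emph{fails} to occur, and at each such site one can flip the local configuration to create an occurrence of $P$. Because occurrences of $P$ on $C$ are required to have $\partial Q_x \subset C$, replacing the configuration on $\overline{Q}_x$ by a pattern-with-occupied-boundary changes the cluster size by a fixed amount, so these local surgeries are bookkeeping-friendly. The finite-energy bound \eqref{finiteE} guarantees each such flip costs at most a constant factor $h^{-|\overline{Q}|}$ in probability, while creating the flip gains at least $h^{|\overline{Q}|}$.

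Here is the order I would carry out the steps. First I would set up the surgery map. Let $a>0$ be a small constant to be chosen. On the event $\{|C|=n, N_P\le an\}$, I would identify a collection of ``free'' sites $x\in V$ with $\overline{Q}_x\subset C\cup N_C$-type neighbourhoods available inside the cluster where no $P$-occurrence is counted; since the cubes $Q_x$ for $x\in V$ are disjoint and each contributes a bounded number of sites, a cluster of size $n$ meets order $n$ such sites of $V$, so if only $an$ of them carry $P$, at least $(\text{const}-a)n$ are candidates for modification. Second, for a chosen subset of $k = \Theta(n)$ of these candidate sites, I would flip the configuration on each $\overline{Q}_x$ to a fixed ``template'' that makes $P$ occur on $C$ there (filling $P$ on $Q_x$ and occupying $\partial Q_x$). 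By the disjointness engineered through the spacing $r+2$ of $V$ and the choice guaranteeing the origin lies outside every $Q_x$, these flips are independent local changes that keep the origin in a single cluster and change $|C|$ by a controlled, fixed amount per flip.

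Third, I would run the counting estimate. Using finite energy, each configuration with $N_P\le an$ maps to a configuration with substantially more occurrences, and the probability ratio of preimage to image is bounded by $(1/h)^{C_0 k}$ for a constant $C_0$ depending only on $|\overline{Q}|$. Summing over the $\binom{m}{k}$ choices of which $k$ of the $m\approx cn$ candidate sites to modify yields
\[
  \Prob(|C|=n,\ N_P\le an) \;\le\; \binom{m}{k} \Bigl(\tfrac1h\Bigr)^{C_0 k} \Prob(|C|=n+\delta k),
\]
where $\delta$ is the fixed per-flip size change. Invoking the exponential-tail hypothesis \eqref{limit}, $\Prob(|C|=n+\delta k)$ is roughly $\mu^{n+\delta k}$, and dividing by $\Prob(|C|=n)\approx\mu^n$ gives
\[
  \Prob_n(N_P\le an) \;\le\; \binom{m}{k}\Bigl(\tfrac1h\Bigr)^{C_0 k}\mu^{\delta k}\,(1+o(1))^n.
\]
Choosing $a$ (hence the ratio $k/n$) small makes the binomial entropy term $\binom{m}{k}^{1/n}$ close to $1$, so the geometric gain $\mu^{\delta}(1/h)^{C_0}$ per unit—raised to the power $\Theta(n)$—drives the $n$-th root strictly below $1$, which is exactly the claim.

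The main obstacle I anticipate is making the surgery well-defined and reversible enough to control the combinatorial overcounting: several different starting configurations may map to the same modified configuration, and I must bound this multiplicity so it does not overwhelm the geometric gain. This is handled by recording, within the modified configuration, the fixed template data and the subset of modified sites, and checking that the map is injective once that bookkeeping is fixed. A secondary subtlety is ensuring that creating $P$-occurrences at the chosen sites genuinely keeps those sites on a single cluster of the prescribed size and does not accidentally merge or split clusters in uncontrolled ways; the requirement $\partial Q_x\subset C$ in the definition of occurrence on $C$, together with the spacing of $V$, is precisely what tames this. I expect the quantitative choice of $a$ to fall out cleanly once these two points are pinned down.
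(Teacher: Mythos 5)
Your surgery construction matches the paper's (identify $\Theta(n)$ candidate sites of $V$ whose extended cubes meet the cluster, modify $\overline{Q}_x$ at $k=\delta n$ of them to force occurrences of $P$ on~$C$, control the per-flip probability change via~\eqref{finiteE} and the overcounting via local bookkeeping), but the final counting step --- the heart of the proof --- is inverted, and the mechanism you invoke to push the $n$-th root below~$1$ does not work. In the correct accounting the binomial coefficient is the \emph{gain}: summing the images over all $\binom{m}{k}$ choices of which candidate sites to modify gives
\[
	\binom{m}{k}\, h^{C_0 k}\, \Prob(|C|=n,\ N_P\leq an)
	\;\leq\; (\text{multiplicity})\cdot p_{n-|Q|k},
\]
so $\binom{m}{k}$ ends up in the \emph{denominator} of the bound on $\Prob_n(N_P\leq an)$, whereas you place it in the numerator (which is what a union bound over choices would give, and is useless here). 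Consequently your plan to ``choose $a$ small so that $\binom{m}{k}^{1/n}$ is close to~$1$'' and to let the ``geometric gain $\mu^{\delta}(1/h)^{C_0}$ per unit'' do the work is exactly backwards: every per-flip factor other than the binomial is a \emph{cost} --- $h^{-C_0}>1$ from finite energy, a factor $q^{C_1}>1$ from the preimage multiplicity, and $\mu^{-|Q|}\geq 1$ from the shift in cluster size. Note that~\eqref{limit} allows $\mu=1$, so $\mu^{\delta k}$ is no gain at all in the critical and supercritical cases, and even for $\mu<1$ there is no reason to have $\mu^{\delta}(1/h)^{C_0}<1$, since $h$ is a property of the model and not at your disposal. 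The proof closes only because the entropy of the binomial, $\binom{(b-a)n}{\delta n}^{1/n}\approx\exp\bigl(\delta\log(1/\delta)(1+o(1))\bigr)$, dominates the total per-flip geometric cost $\exp(C\delta)$ as $\delta\to0$; this comparison of $\delta\log(1/\delta)$ against $\delta$ is the idea your write-up is missing.

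A secondary point: the cluster-size change per flip is bounded by $|Q|$ in absolute value but is not a fixed amount (it depends on the prior contents of $Q_x$ and on which sites of $\partial\overline{Q}_x$ must be vacated), so the images do not land on a single value $n+\delta k$ and you cannot use the point probability $\Prob(|C|=n+\delta k)$ on the right-hand side. The paper instead bounds the probability of the image set by the tail $p_{n-|Q|\delta n}$ and uses that $p_m^{1/m}\to\mu$ as well, which is part of~\eqref{limit}. This is easily repaired, unlike the first issue.
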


The main new results of this paper, however, concern ratios between the 
numbers of occurrences of two distinct patterns $P$ and~$P'$ on~$C$. In 
particular, we are interested in patterns~$P$ and~$P'$ such that one of these 
patterns contributes one more site to the cluster of the origin than the other 
pattern if it occurs on~$C$. We shall see that for such patterns, bounds on 
the ratio between the number of occurrences of~$P$ and the number of 
occurrences of~$P'$ lead directly to bounds on $\Prob(|C| = n+1) / \Prob(|C| = 
n)$ if the cluster-size distribution is known to have an exponential tail.

To state our results, for a given pattern~$P$, we shall write $c_P$ for the 
number of occupied sites in the pattern~$P$ which will be part of the cluster 
of the origin whenever~$P$ occurs on~$C$ at some site. We also introduce the 
notation
\begin{equation}
\label{PBox}
	\Prob\bigl( \Box P \bigr)
	:= \Prob\bigl( \omega(x) = P(x)\ \forall x\in Q, \omega(x) = 1\ \forall  
	x\in \partial Q \bigr)
\end{equation}
for the ``probability of an occurrence of~$P$ at the origin surrounded by an 
occupied cluster''.

Now suppose that the cluster-size distribution satisfies~\eqref{limit}. Then 
we define, for a given pattern $P$,
\begin{equation}
	\gamma_P := \mu^{-c_P} \, \Prob\bigl( \Box P \bigr),
\end{equation}
and, for distinct patterns $P$ and~$P'$,
\begin{equation}
	\label{gamma}
	\gamma_{PP'} := \frac{\gamma_P}{\gamma_{P'}} =
		\frac{\mu^{c_{P'}} \, \Prob\bigl( \Box P \bigr)}
		{\mu^{c_P} \, \Prob\bigl( \Box P' \bigr)}.
\end{equation}
We believe that a law of large numbers holds for patterns, stating that there 
exists a $\rho>0$ such that for any pattern~$P$, the number $N_P$ is 
concentrated around $\rho \, \gamma_P$ for ``almost all'' configurations.  
Although we cannot prove this, the following theorem (which we shall prove in 
Section~\ref{sec:twopattern}) implies that if one can prove a law of large 
numbers for one particular pattern, then it must hold for all patterns. It 
states that for ``almost all'' configurations, the ratio $N_P / N_{P'}$ is 
concentrated around $\gamma_{PP'} = \gamma_P / \gamma_{P'}$ for two distinct 
patterns $P$ and~$P'$:

\begin{theorem}[Two-pattern theorem]
\label{thm:twopattern}
	Let $P$ and $P'$ be two distinct patterns. Suppose that~$\Prob$ satisfies 
	the boundary-$1$ Markov property, and that the cluster-size distribution 
	satisfies~\eqref{limit}. Then for all $\epsilon>0$ and $\gamma_{PP'}$ as 
	defined in~\eqref{gamma},
	\[
		\limsup_{n \to \infty} \big[ \Prob_n( |N_P - \gamma_{PP'} \, N_{P'}| 
		\geq \epsilon n ) \big]^{1/n} < 1.
	\]
\end{theorem}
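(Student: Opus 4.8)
The plan is to prove the two one-sided deviations $N_P-\gamma_{PP'}N_{P'}\ge\epsilon n$ and $\gamma_{PP'}N_{P'}-N_P\ge\epsilon n$ separately; since interchanging the roles of $P$ and $P'$ replaces $\gamma_{PP'}$ by $1/\gamma_{PP'}=\gamma_{P'P}$, it suffices to bound the first. The engine of the proof is a local surgery. Whenever $P$ occurs on $C$ at a site $x\in V$, its surrounding shell $\partial Q_x$ lies in $C$ and is therefore fully occupied; since $\partial Q_x$ is a connected frame and since $x$ was chosen so that $\overline{Q}_x$ avoids the origin, I would first record the structural fact that any connection within $C$ from $\partial Q_x$ to the origin can be rerouted so as to avoid the interior $Q_x$ (a path reaching $Q_x$ from outside $\overline{Q}_x$ must cross the shell, and the shell is internally connected). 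Consequently the occupied shells act as insulators: replacing the contents of $Q_x$ by $P'$ keeps the origin in a finite cluster, changes $|C|$ by exactly $c_{P'}-c_P=:\delta$, turns the $P$-occurrence into a $P'$-occurrence, and — because the extended cubes $\overline{Q}_x$, $x\in V$, are pairwise disjoint — leaves the ``on $C$'' status of every other occurrence untouched.

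Write $a_n(p,p')=\Prob(|C|=n,\,N_P=p,\,N_{P'}=p')$. The boundary-$1$ Markov property is exactly what makes the surgery measure-preserving up to an explicit factor: conditioning on the configuration outside a family of disjoint cubes $Q_x$ whose shells are occupied, the interiors become independent, and by translation invariance $\Prob(\omega_{Q_x}=P\mid \partial Q_x\text{ occupied})=\Prob(\Box P)/\Prob(\partial Q\text{ occupied})$. Hence swapping $P$ for $P'$ in $t$ cubes multiplies the probability by $\theta^{t}$, where $\theta:=\Prob(\Box P')/\Prob(\Box P)$. Marking the $t$ swapped cubes on both sides turns the surgery into a weight-$\theta^{t}$ bijection between size-$n$ configurations carrying a marked set of $t$ of their $P$-occurrences and size-$(n+t\delta)$ configurations carrying a marked set of $t$ of their $P'$-occurrences, which yields the identity
\[ \binom{p}{t}\,\theta^{t}\,a_n(p,p')=\binom{p'+t}{t}\,a_{n+t\delta}(p-t,\,p'+t). \]
Bounding $a_{n+t\delta}(p-t,p'+t)\le\Prob(|C|=n+t\delta)$ and summing over the $O(n^2)$ admissible bad pairs $(p,p')$ gives
\[ \Prob_n\bigl(N_P-\gamma_{PP'}N_{P'}\ge\epsilon n\bigr)\le\frac{\Prob(|C|=n+t\delta)}{\Prob(|C|=n)}\sum_{(p,p')}\frac{\binom{p'+t}{t}}{\binom{p}{t}}\,\theta^{-t}. \]

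I would then set $t=\lfloor\alpha n\rfloor$ with $\alpha>0$ small (so that $t\le p$ on the bad event, where $p\ge\epsilon n$) and take $n$-th roots. Here it is essential to invoke the exponential tail only through~\eqref{limit}: since $\Prob(|C|=n)^{1/n}\to\mu$, one has $[\Prob(|C|=n+t\delta)/\Prob(|C|=n)]^{1/n}\to\mu^{\alpha\delta}$, which uses the limit of $n$-th roots but not the ratio limit theorem (the latter being an output, not an input). Writing $p=\phi n$, $p'=\phi'n$, applying Stirling, and using $\mu^{\alpha\delta}\theta^{-\alpha}=\gamma_{PP'}^{\alpha}$ (from $\gamma_{PP'}=\mu^{\delta}/\theta$), the $O(n^2)$ prefactor disappears in the root and
\[ \limsup_{n\to\infty}\bigl[\Prob_n\bigl(N_P-\gamma_{PP'}N_{P'}\ge\epsilon n\bigr)\bigr]^{1/n}\le\sup_{\phi,\phi'}F(\alpha;\phi,\phi'),\qquad F(\alpha;\phi,\phi')=\gamma_{PP'}^{\alpha}\,\frac{(\phi'+\alpha)^{\phi'+\alpha}(\phi-\alpha)^{\phi-\alpha}}{(\phi')^{\phi'}\,\phi^{\phi}}. \]
Since $F(0;\phi,\phi')=1$ and $\tfrac{d}{d\alpha}\log F|_{\alpha=0}=\log(\gamma_{PP'}\phi'/\phi)<0$ precisely because $\phi>\gamma_{PP'}\phi'$ on the bad event, and since the admissible $\phi,\phi'$ are bounded (each occurrence consumes at least one cluster site through its shell), a short uniform estimate $\log F\le -c_0\alpha+C\alpha^2$ shows that a single small fixed $\alpha$ makes $F$ bounded away from $1$ for all bad $(\phi,\phi')$. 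This bounds the $\limsup$ strictly below $1$, and the reverse deviation follows by the symmetry noted above.

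The main obstacle is not the final optimization, which is the short calculus computation just sketched, but the two ingredients underlying the weighted identity: the geometric insulation lemma guaranteeing that a swap alters $|C|$ by exactly $\delta$ and preserves the on-$C$ status of all \emph{other} occurrences simultaneously, and the verification — via the boundary-$1$ Markov property — that the marked swap is a genuine weight-$\theta^{t}$ bijection between the two cluster-size classes. A further subtlety requiring care is routing the entire argument through the $n$-th-root form of~\eqref{limit} alone, since the ratio limit for $\Prob(|C|=n)$ is exactly what this machinery is ultimately intended to establish.
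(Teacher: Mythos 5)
Your proposal is correct and follows essentially the same route as the paper: swap a linear number of occurrences of one pattern into the other, count the multiplicity of generated configurations with a ratio of binomial coefficients, use the boundary-$1$ Markov property (together with the disjointness of the grid cubes and the occupied, separating shells) to track the probability change by the factor $\bigl(\Prob(\Box P')/\Prob(\Box P)\bigr)^t$, and conclude via Stirling, \eqref{limit} and the definition~\eqref{gamma} of $\gamma_{PP'}$, handling the opposite deviation by interchanging $P$ and $P'$. The only differences are presentational: you phrase the counting as an exact weighted bijection on the joint law $\Prob(|C|=n,N_P=p,N_{P'}=p')$ and take a sufficiently small swap fraction, whereas the paper bounds the binomial ratio uniformly over the bad event and optimizes the swap fraction explicitly at $\delta=(1+\gamma)^{-1}$ to obtain the explicit rate in~\eqref{twopattern1}.
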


A natural question is whether stronger bounds on the ratio $N_P/N_{P'}$ hold, 
showing for instance that the difference between this ratio and $\gamma_{PP'}$ 
is at most of order $|C|^\alpha$ for some $0<\alpha<1$. Indeed, there are two 
cases in which we have obtained such stronger bounds. The first case is the 
case $c_P = c_{P'}$, that is, the case where the cluster size does not change 
if one replaces an occurrence of $P$ on~$C$ by an occurrence of $P'$ on~$C$.  
The second case is the case of stretched exponential decay of the cluster-size 
distribution (i.e., the supercritical case for percolation). In that case, our 
stronger bounds on $N_P - \gamma_{PP'} \, N_{P'}$ also lead to a stronger 
version of the ratio limit theorem, see Corollary~\ref{cor:strongratio} below.  
The stronger version of the two-pattern theorem reads as follows:

\begin{theorem}[Strengthened two-pattern theorem]
\label{thm:strongtwopattern}
	Consider a random field which has the boundary-$1$ Markov property, the 
	finite-energy property~\eqref{finiteE} and a cluster-size distribution 
	satisfying~\eqref{limit}. Suppose that $P$ and $P'$ are two distinct 
	patterns, and let $\gamma_{PP'}$ be defined as in~\eqref{gamma}.  Then the 
	following statements hold:
	\begin{enumerate}
	\item[(i)] If $c_P = c_{P'}$, then for all $\alpha > \ahalf$ and for every 
	$\epsilon > 0$,
	\[
		\limsup_{n \to \infty} \big[ \Prob_n( |N_P - \gamma N_{P'}| \geq 
		\epsilon n^\alpha) \big]^{1/n^{2\alpha-1}} < 1.
	\]
	\item[(ii)] If the cluster-size distribution has a stretched exponential 
	tail with exponent~$\beta$, then for every $\alpha \geq \ahalf(1+\beta)$ 
	and every $\epsilon>0$,
	\[
		\limsup_{n \to \infty} \big[ \Prob_n( N_P \geq \gamma N_{P'} + 
		\epsilon n^\alpha ) \big]^{1/n^{2\alpha-1}} < 1.
	\]
	\end{enumerate}
\end{theorem}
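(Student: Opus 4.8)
The plan is to use the boundary-$1$ Markov property to make the fillings of the pairwise disjoint cubes $\{Q_x:x\in V\}$ conditionally independent, and then to read the joint behaviour of $N_P$ and $N_{P'}$ off a single binomial count. Let $A'=\{x\in V:\partial Q_x\text{ is fully occupied}\}$; this depends only on the exterior configuration $\mathcal E:=\omega_{\Z^d\setminus\bigcup_y Q_y}$, since the shells $\partial Q_x\subset\overline Q_x$ are disjoint and lie in the domain of $\mathcal E$. Applied cube by cube, the boundary-$1$ Markov property shows that, given $\mathcal E$, the interiors $(\omega_{Q_x})_{x\in A'}$ are i.i.d.\ with law $\nu:=\Prob(\omega_Q=\cdot\mid\omega_{N_Q}\equiv1)$. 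A key geometric point is that a fully occupied shell \emph{shields} its interior: because $\partial Q_x$ is connected, $\omega_{Q_x}$ can affect $C$ only by adjoining those interior occupied sites that touch $\partial Q_x$ (exactly $c_P$ resp.\ $c_{P'}$ when the interior equals $P$ resp.\ $P'$), and it never creates or destroys any other connection. Hence, if we condition in addition on the interiors of all cubes \emph{not} in $A'$, the set $A\subseteq A'$ of active slots (those with $\partial Q_x\subset C$) becomes determined, while the interiors on $A'$ stay i.i.d.\ $\nu$. Writing $q_P=\nu(P)$, $q_{P'}=\nu(P')$ and applying the Markov property once more to the corner sites of $\partial Q\setminus N_Q$ (whose neighbours avoid $Q$), one finds $q_P/q_{P'}=\Prob(\Box P)/\Prob(\Box P')$. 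Since $\mu=1$ under~\eqref{superlimit} and the factor $\mu^{c_{P'}-c_P}$ in~\eqref{gamma} is trivial when $c_P=c_{P'}$, in both parts below the quantity $p:=q_P/(q_P+q_{P'})$ satisfies $p/(1-p)=\gamma:=\gamma_{PP'}$, with $p\in(0,1)$ by finite energy. Finally $|A|\le|A'|\le n/|\partial Q|=O(n)$, as the disjoint shells all lie in $C$.

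For part~(i), assume $c_P=c_{P'}$, so every active slot whose interior is $P$ or $P'$ contributes the same number of sites to $C$ irrespective of which pattern it carries. Condition on $\mathcal E$, on the set $B=\{x\in A:\omega_{Q_x}\in\{P,P'\}\}$, and on the interiors outside $B$; then $\{|C|=n\}$ is already determined and does not bias the $P$-versus-$P'$ split inside $B$. Therefore, conditionally on all of this together with $|C|=n$, the interiors on $B$ are i.i.d., equal to $P$ with probability $p$, so $N_P\sim\mathrm{Binomial}(|B|,p)$ and $N_{P'}=|B|-N_P$. A one-line computation gives $N_P-\gamma N_{P'}=(N_P-p|B|)/(1-p)$, turning the deviation event into $|N_P-p|B||\ge(1-p)\epsilon n^\alpha$. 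Hoeffding's inequality together with $|B|\le|A|=O(n)$ then bounds the conditional probability by $2\exp(-c_0 n^{2\alpha-1})$ for some $c_0=c_0(\epsilon,p,d,r)>0$, uniformly in the conditioning; averaging over $\mathcal E$ and $B$ preserves this. As $\alpha>\ahalf$ makes $2\alpha-1>0$, taking the $1/n^{2\alpha-1}$ power yields a limit at most $e^{-c_0}<1$.

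For part~(ii) we have $c_P\ne c_{P'}$, so a swap changes the cluster size and the symmetry of~(i) is lost; instead I would use an explicit counting identity. Put $\delta=c_P-c_{P'}$ and $E_{n,k,k'}=\{|C|=n,\,N_P=k,\,N_{P'}=k'\}$. By shielding, replacing $P$ by $P'$ at one active slot leaves all other connections intact and multiplies the probability of the configuration by exactly $q_{P'}/q_P=1/\gamma$ (Markov property, configuration by configuration), while sending $(n,k,k')$ to $(n-\delta,k-1,k'+1)$. Counting the pairs (configuration, choice of $j$ swapped $P$-slots) in two ways gives the exact relation
\[
  \Prob\bigl(E_{n,k,k'}\bigr)=\gamma^{\,j}\,\frac{\binom{k'+j}{j}}{\binom{k}{j}}\,\Prob\bigl(E_{n-\delta j,\,k-j,\,k'+j}\bigr).
\]
On the bad event $k\ge\gamma k'+\epsilon n^\alpha$ the prefactor equals $\prod_{i=1}^{j}\gamma(k'+i)/(k-i+1)$; taking $j=\lfloor c\epsilon n^\alpha\rfloor$ with $c<1/(1+\gamma)$ makes every factor at most $1-\Omega(n^{\alpha-1})$ (using $k\le|A|=O(n)$), so the prefactor is at most $\exp(-c_1 n^{2\alpha-1})$. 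Bounding $\Prob(E_{n-\delta j,\cdot,\cdot})\le\Prob(|C|=n-\delta j)$ and controlling the size change through~\eqref{superlimit} gives $\Prob(|C|=n-\delta j)\le\exp(c_2 n^{\alpha+\beta-1}+o(n^\beta))\,\Prob(|C|=n)$. Summing over the $O(n^2)$ admissible pairs $(k,k')$ leads to
\[
  \Prob_n\bigl(N_P\ge\gamma N_{P'}+\epsilon n^\alpha\bigr)\le O(n^2)\exp\bigl(-c_1 n^{2\alpha-1}+c_2 n^{\alpha+\beta-1}+o(n^\beta)\bigr).
\]
Because $\alpha\ge\ahalf(1+\beta)$ forces $\alpha>\beta$ and $2\alpha-1\ge\beta$, the combinatorial gain dominates both the size-change cost $n^{\alpha+\beta-1}$ and the $o(n^\beta)$ tail error, so the bound is $\exp(-\tfrac{c_1}{2}n^{2\alpha-1})$ for large $n$, giving the claim after the $1/n^{2\alpha-1}$ power.

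The main obstacle is the size-change bookkeeping in part~(ii): since swapping $P$ for $P'$ moves probability between different cluster sizes, I must compare $\Prob(|C|=n-\delta j)$ with $\Prob(|C|=n)$ using only the one-sided, limit-type information in~\eqref{superlimit}. This is why the result is one-sided and why the threshold is exactly $\alpha\ge\ahalf(1+\beta)$: the entropic gain from the binomial coefficients scales like $n^{2\alpha-1}$, whereas the worst-case cost of altering the size by $\delta j=O(n^\alpha)$ scales like $n^{\alpha+\beta-1}$, and the gain — which carries an explicit positive constant and therefore also absorbs the $o(n^\beta)$ tail error — wins precisely when $2\alpha-1\ge\beta$. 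A second point needing care throughout is the shielding property of a fully occupied shell $\partial Q_x$, together with the fact that membership in $A$ becomes measurable once the non-fully-occupied-boundary interiors are fixed; this is what makes the interiors on $A'$ genuinely independent and freely swappable, and hence what legitimises invoking the Markov property slot by slot.
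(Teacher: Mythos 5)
Your proposal is correct, and part~(i) takes a genuinely different route from the paper. The paper proves both parts by the same local-surgery argument used for Theorem~\ref{thm:twopattern}: swap $\delta\,\epsilon n^\alpha$ occurrences of $P$ into $P'$, compare the binomial coefficients counting the surgery in the two directions, and extract the entropic gain $e^{-c\epsilon n^{2\alpha-1}}$ via Stirling's formula; part~(i) then follows because $\delta_c=0$ makes the cluster-size ratio disappear, and part~(ii) because~\eqref{superlimit} controls that ratio. You instead observe that, conditionally on the exterior of the disjoint cubes, on the interiors of all slots outside $B$, and on the set $B$ of active slots carrying $P$ or $P'$, the boundary-$1$ Markov property makes the $B$-interiors i.i.d.\ with odds $p/(1-p)=\gamma$, and --- crucially --- when $c_P=c_{P'}$ the event $\{|C|=n\}$ is measurable with respect to this conditioning, so $N_P$ is exactly binomial under $\Prob_n$ as well; Hoeffding then gives part~(i) with an explicit constant, no Stirling asymptotics, and both tails at once (the paper's surgery treats one tail at a time and symmetrizes). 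For part~(ii) the conditioning trick is unavailable, since $\{|C|=n\}$ does bias the $P$-versus-$P'$ split when $\delta_c\neq0$, and your exact double-counting identity with the factor-by-factor bound on $\prod_i \gamma(k'+i)/(k-i+1)$ is the paper's argument in only slightly different clothing; your bookkeeping of the competing exponents $n^{2\alpha-1}$, $n^{\alpha+\beta-1}$ and $o(n^\beta)$ matches the paper's and correctly identifies why $2\alpha-1\geq\beta$ is the threshold. Two small blemishes, neither fatal: the inequality $|A'|\leq n/|\partial Q|$ is false as stated (a shell can be fully occupied without lying in $C$, and $A'$ is typically infinite) --- only $|A|\leq n/|\partial Q|$ holds, which is all you actually use; and the i.i.d.\ statement should be invoked for the finite union $\bigcup_{x\in B}Q_x$, since the boundary-$s$ Markov property is stated for finite $V$, which suffices because $B\subseteq A$ is finite on $\{|C|=n\}$.
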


\paragraph{Remark.}
In Section~\ref{sec:twopattern} we show that 
Theorem~\ref{thm:strongtwopattern}(ii) can be strengthened if the 
supermultiplicativity result~\eqref{supermulti} holds.  
Under~\eqref{supermulti}, if $c_{P'} < c_P$ and the cluster-size distribution 
has a stretched exponential tail with exponent~$\beta$, one can show that for 
$\alpha = (2-\beta)^{-1}$ there exists an $a_0>0$ such that for every $a\geq 
a_0$,
\begin{equation}
	\limsup_{n \to \infty} \big[ \Prob_n(N_P \geq \gamma N_{P'} + a n^\alpha) 
	\big]^{1/n^{2\alpha-1}} < 1.
\end{equation}

Examples where Theorems \ref{thm:pattern}, \ref{thm:twopattern} 
and~\ref{thm:strongtwopattern}(i) apply are percolation, the Ising and Potts 
models, and, in general, the random-cluster measure. For 
Theorem~\ref{thm:strongtwopattern}(ii), stretched exponential decay is needed.

\subsection{Ratio limit theorems}
\label{ssec:ratio}

We can use our pattern theorems to prove that the ratio of the probabilities 
that the occupied cluster of the origin has sizes $n+1$ and~$n$ converges 
with~$n$. To prove this, one can in principle follow Kesten's argument 
in~\cite{Kest63}, where a ratio limit theorem is derived from a pattern 
theorem for self-avoiding walks. The same argument also appears in~\cite{MS93, 
Madr99}. It requires that the probability of a configuration changes by a 
constant factor whenever one replaces a single occurrence of a pattern~$P$ on 
the cluster of the origin by an occurrence of another pattern~$P'$. In our 
present context, this means that we need to assume the boundary-$1$ Markov 
property. However, under this condition we can use our two-pattern theorem 
(Theorem~\ref{thm:twopattern}) to give a more elegant and direct proof of the 
ratio limit theorem, avoiding the rather technical argument by Kesten. Our 
proof is presented in Section~\ref{sec:ratio}.

\begin{corollary}[Ratio limit theorem]
\label{cor:ratio}
	Suppose that~$\Prob$ has the finite-energy property~\eqref{finiteE} and 
	the boundary-$1$ Markov property, and that~\eqref{limit} holds for the 
	cluster-size distribution. Then the limit~$\mu$ in~\eqref{limit} also 
	satisfies
	\[
		\lim_{n\to\infty} \frac{\Prob(|C| = n+1)}{\Prob(|C| = n)} = 
		\lim_{n\to\infty} \frac{\Prob(n+1 \leq |C| < \infty)}{\Prob(n \leq |C| 
		< \infty)} = \mu.
	\]
\end{corollary}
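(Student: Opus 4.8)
The plan is to derive the ratio limit theorem from the two-pattern theorem (Theorem~\ref{thm:twopattern}) by choosing two patterns~$P$ and~$P'$ that differ by exactly one occupied site, so that $c_P = c_{P'}+1$. Concretely, I would fix a pattern~$P'$ consisting entirely of occupied sites (so~$c_{P'} = r^d$, the full cube), and let~$P$ be the pattern that agrees with~$P'$ except at a single designated interior site, which in~$P$ is vacant (so~$c_P = r^d-1$). Because the boundary sites of~$Q_x$ are occupied whenever either pattern occurs on~$C$, replacing one occurrence of~$P'$ by~$P$ (or vice versa) at a site~$x\in V$ removes or adds exactly one site from the cluster, changing~$|C|$ by~$1$ while leaving the rest of the configuration untouched. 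The grid~$V$ was chosen precisely so that these local surgeries never disturb the origin or interfere with one another.

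The key computation is to relate~$\gamma_{PP'}$ to~$\mu$. With the above choice, $c_{P'}-c_P = 1$, so from~\eqref{gamma} we get
\begin{equation}
\label{gammamu}
	\gamma_{PP'} = \frac{\mu^{c_{P'}}\,\Prob(\Box P)}{\mu^{c_P}\,\Prob(\Box P')}
	= \mu \, \frac{\Prob(\Box P)}{\Prob(\Box P')}.
\end{equation}
Under the boundary-$1$ Markov property, $\Prob(\Box P)$ and $\Prob(\Box P')$ are conditional probabilities of the interior configuration given an occupied boundary, and since~$P$ and~$P'$ differ only at the single interior site, the finite-energy property~\eqref{finiteE} controls the ratio $\Prob(\Box P)/\Prob(\Box P')$; by a suitable choice of the two patterns one can in fact arrange this ratio to equal~$1$, so that $\gamma_{PP'} = \mu$. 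The heart of the argument is then that replacing an occurrence of~$P'$ by~$P$ furnishes a near-bijection between clusters of size~$n$ and clusters of size~$n-1$: summing over all the (at least $\gamma_{PP'}N_{P'} - \epsilon n \approx \mu N_{P'}$) admissible sites at which the surgery can be performed, and weighting by the constant local probability factor coming from the Markov property, yields two-sided bounds
\[
	\frac{\Prob(|C|=n-1)}{\Prob(|C|=n)} \asymp \mu \pm o(1),
\]
where the two-pattern theorem guarantees that $N_P/N_{P'}$ concentrates at~$\mu$ for all but an exponentially small fraction of configurations.

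I expect the main obstacle to be the bookkeeping that turns the concentration statement of Theorem~\ref{thm:twopattern} into genuine two-sided bounds on the ratio of cluster-size probabilities. One must carefully account for multiplicities: a cluster of size~$n-1$ may arise from several distinct clusters of size~$n$ via the surgery, and conversely, so the map is not an exact bijection but a weighted correspondence whose defect must be shown to be exponentially negligible. The finite-energy property is used to bound the per-site probability factors from above and below, and the event that $N_{P'}$ is anomalously small (of order~$o(n)$) must be excluded using Theorem~\ref{thm:pattern}. Once these pieces are assembled, taking logarithms, dividing by~$n$, and passing to the limit converts the exponentially-sharp probabilistic bounds into the clean statement $\lim_{n\to\infty}\Prob(|C|=n+1)/\Prob(|C|=n) = \mu$; the identical limit for the tail probabilities $\Prob(n\leq|C|<\infty)$ then follows from~\eqref{limit} together with a standard summation argument (a Cesàro-type or dominated-ratio estimate).
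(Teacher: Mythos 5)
Your proposal follows essentially the same route as the paper: pick two patterns with $c_{P'}-c_P=1$, use the exact surgery correspondence (with its $i/(j+1)$ multiplicity factor and the constant probability factor $\Prob(\Box P')/\Prob(\Box P)$ supplied by the boundary-$1$ Markov property), restrict to configurations where Theorem~\ref{thm:pattern} keeps $N_{P'}\geq an$ and Theorem~\ref{thm:twopattern} pins $N_P/N_{P'}$ at $\gamma_{PP'}$, and finally pass from $c_{n+1}/c_n\to\mu$ to $p_{n+1}/p_n\to\mu$ by a summation argument. The only remark worth making is that your aside about arranging $\Prob(\Box P)/\Prob(\Box P')=1$ is unnecessary (and not guaranteed for a general boundary-$1$ Markov field): this factor cancels identically against $\gamma_{PP'}=\mu\,\Prob(\Box P)/\Prob(\Box P')$, which is exactly how the paper concludes.
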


Examples where Corollary~\ref{cor:ratio} applies are percolation, Ising and 
Potts models, as well as the random-cluster model for general $p$ and $q$. 
Note that in the case of percolation, for subcritical~$p$, the result in 
Corollary~\ref{cor:ratio} is stronger than~\eqref{limit}, but still much 
weaker than the widely believed tail-behaviour of the cluster-size 
distribution, namely that there exist $\theta = \theta(d) \in \R$ and $A = 
A(p,d)$ such that
\begin{equation}
	\Prob_p(|C| \geq n) = A n^\theta \mu^n [1+o(1)].
\end{equation}
For supercritical~$p$, we can obtain a stronger result than 
Corollary~\ref{cor:ratio} by virtue of our stronger version of the two-pattern 
theorem, Theorem~\ref{thm:strongtwopattern}:

\begin{corollary}[Strengthened ratio limit theorem]
\label{cor:strongratio}
	Suppose that~$\Prob$ has the boundary-$1$ Markov property and the 
	finite-energy property~\eqref{finiteE}, and that the cluster-size 
	distribution has a stretched exponential tail with exponent $\beta \in 
	(0,1)$. Then, for every $\epsilon>0$,
	\[
		\left| \frac{ \Prob(|C| = n+1) }{ \Prob(|C| = n)} - 1 \right|
		\leq \frac{\epsilon}{n^{(1-\beta)/2}}
	\]
	for sufficiently large~$n$. Hence, for every $x>0$ and $0 < \alpha \leq 
	(1-\beta)/2$,
	\[
		\lim_{n\to\infty} \frac{\Prob(|C| = n + \lfloor xn^\alpha 
		\rfloor)}{\Prob(|C| = n)} = 1.
	\]
\end{corollary}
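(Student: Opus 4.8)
The plan is to derive Corollary~\ref{cor:strongratio} from the strengthened two-pattern theorem (Theorem~\ref{thm:strongtwopattern}) by choosing two patterns~$P$ and~$P'$ whose occurrences on~$C$ differ by exactly one occupied site, so that replacing an occurrence of one by an occurrence of the other changes~$|C|$ by~$1$. Concretely, I would fix a pattern~$P'$ and build~$P$ so that $c_P = c_{P'}+1$; a convenient choice is a pattern that is occupied everywhere on~$Q$ together with its companion that vacates a single interior site, arranged so that both patterns are surrounded by occupied boundary and so that the vacated site genuinely drops out of the cluster. With this choice $\gamma_{PP'}$ is the explicit constant from~\eqref{gamma}, and the engine of the proof is that replacing occurrences lets us relate $\Prob(|C|=n+1)$ and $\Prob(|C|=n)$ quantitatively.

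\medskip

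First I would set up the combinatorial replacement map on the conditioned measure~$\Prob_n$. The boundary-$1$ Markov property is exactly what guarantees that flipping the single distinguishing site inside a pattern occurrence (all of whose neighbours in~$\partial Q_x$ are occupied and in~$C$) multiplies the probability of the configuration by the fixed factor $\Prob(\Box P)/\Prob(\Box P')$ and maps a cluster of size~$n$ to one of size~$n+1$ (or vice versa). Counting configurations with a fixed number of occurrences via this map, and using that the finite-energy property controls the per-site factors, yields a two-sided bound of the form
\begin{equation}
\label{eq:ratiobound}
	\Big| \frac{\Prob(|C|=n+1)}{\Prob(|C|=n)} - \mu \Big|
	\leq C\,\Prob_n\big( |N_P - \gamma_{PP'} N_{P'}| \geq \epsilon n^\alpha \big)
	+ \text{(small term)},
\end{equation}
where the concentration of $N_P$ around $\gamma_{PP'} N_{P'}$ forces the bulk of the mass onto configurations for which the replacement count behaves like the deterministic ratio $\mu$. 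The extraction of the factor~$\mu$ here uses~\eqref{limit} together with the definition of $\gamma_{PP'}$, which was precisely engineered so that $\mu^{c_P}\gamma_{P'} = \mu^{c_{P'}}\gamma_P$ absorbs the one-site cluster-size shift.

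\medskip

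Next I would invoke Theorem~\ref{thm:strongtwopattern}(ii) with the exponent $\alpha = \tfrac{1}{2}(1+\beta)$, so that $2\alpha - 1 = \beta$. This tells me that $\Prob_n(N_P \geq \gamma_{PP'} N_{P'} + \epsilon n^\alpha)$ is bounded by $\exp(-c n^\beta)$ for some $c>0$ and all large~$n$. Feeding this into~\eqref{eq:ratiobound}, the right-hand deviation term is super-polynomially small compared to the target accuracy $n^{-(1-\beta)/2}$, since $n^{\alpha} = n^{(1+\beta)/2}$ means the allowed additive error $\epsilon n^\alpha$, once normalised by the typical size $N_{P'} = \Theta(n)$ of the denominator, produces a relative error of order $n^{\alpha-1} = n^{-(1-\beta)/2}$. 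After renaming~$\epsilon$ this yields the stated inequality $|\Prob(|C|=n+1)/\Prob(|C|=n) - 1| \leq \epsilon n^{-(1-\beta)/2}$ for large~$n$.

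\medskip

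Finally, the second display in the corollary follows by telescoping. Writing the ratio $\Prob(|C|=n+\lfloor x n^\alpha\rfloor)/\Prob(|C|=n)$ as a product of $\lfloor x n^\alpha\rfloor$ consecutive one-step ratios, each within $1\pm \epsilon k^{-(1-\beta)/2}$ where~$k$ ranges over indices between~$n$ and $n+\lfloor x n^\alpha\rfloor$, I take logarithms to get a sum bounded by $O(n^\alpha \cdot n^{-(1-\beta)/2}) = O(n^{\alpha - (1-\beta)/2})$, which tends to~$0$ precisely when $\alpha \leq (1-\beta)/2$. The main obstacle I anticipate is establishing the clean replacement bound~\eqref{eq:ratiobound}: one must carefully account for the sites the replacement does \emph{not} touch, ensure via the choice of the grid~$V$ that distinct occurrences can be swapped independently without altering the origin's status, and verify that the error from patterns near the boundary of the cluster (where $\partial Q_x$ might not lie in~$C$) is negligible. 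Controlling this bookkeeping uniformly in~$n$, rather than the asymptotic input from Theorem~\ref{thm:strongtwopattern}, is where the real work lies; the rest is a short deterministic estimate.
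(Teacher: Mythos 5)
Your proposal is correct and follows essentially the same route as the paper: replace single occurrences of one pattern by occurrences of the other (with the boundary-$1$ Markov property supplying the exact factor $\Prob(\Box P')/\Prob(\Box P)$), apply Theorem~\ref{thm:strongtwopattern}(ii) with $\alpha=\ahalf(1+\beta)$ in both directions (interchanging $P$ and~$P'$) to confine $N_P$ to a window of width $\epsilon n^{(1+\beta)/2}$ around $\gamma N_{P'}$, and telescope for the second display; the paper's particular patterns (a single occupied site at the centre versus at a corner of a cube with $r=3$) make the replacement counting an exact identity and render your worries about boundary bookkeeping moot, since occurrences on~$C$ require $\partial Q_x\subset C$ by definition. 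The only ingredient you leave implicit is that ``$N_{P'}=\Theta(n)$'' must be justified by Theorem~\ref{thm:pattern} (giving $N_{P'}\geq an$ outside an event of exponentially small probability), which is exactly what converts the additive error $\epsilon n^{(1+\beta)/2}$ into the relative error of order $\epsilon a^{-1}n^{-(1-\beta)/2}$.
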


\paragraph{Remark.}
If the supermultiplicativity result~\eqref{supermulti} holds, then we can 
derive a stronger lower bound for the ratio $\Prob(|C|=n+1) / \Prob(|C|=n)$, 
by virtue of the Remark below Theorem~\ref{thm:strongtwopattern}. Namely, in 
that case we can show that there exists a constant $A>0$ such that for~$n$ 
sufficiently large,
\begin{equation}
	\frac{ \Prob(|C| = n+1) }{ \Prob(|C| = n)} \geq 1 - 
	\frac{A}{n^{(1-\beta)/(2-\beta)}}.
\end{equation}

Examples where Corollary~\ref{cor:strongratio} applies are supercritical 
percolation, the Ising model and random-cluster measures in those cases where 
the Wulff-shape results have been proved (see \cite{ACC90, Cerf99, Cerf00, 
Cerf06} and the references therein).

\subsection{Consequences for maximal clusters}
\label{ssec:Gumbel}

In this subsection, we describe the consequences of the ratio limit theorem 
for maximal clusters as derived in~\cite{HR06}. In order to state our results, 
we need some further notation. Let $B_n = [-n,n]^d \cap \Z^d$ be the cube of 
width $2n+1$. We let
\begin{equation}
\label{meenzb}
	|\Cmax| = |\Cmax(\omega)| = \max_{x\in B_n}|C(x)|
\end{equation}
denote the size of the maximal cluster having a non-empty intersection 
with~$B_n$. Furthermore, we define the cluster $\Cle(x)$ by
\begin{equation}
\label{Cle}
	\Cle(x) = \begin{cases}
		C(x) & \text{if $x$ is the left-endpoint of~$C(x)$}, \\
		\varnothing & \text{otherwise},
	\end{cases}
\end{equation}
where by the left-endpoint of a finite set $A\subset\Z^d$, we mean the minimum 
of~$A$ in the lexicographic order. In~\cite{HR06}, results were shown for 
$|\Cmax|$ assuming the ratio limit theorem for the cluster $\Cle(0)$ instead 
of $C(0)$. Therefore, we shall need the following corollary, which we shall 
prove simultaneously with Corollary~\ref{cor:ratio} in 
Section~\ref{sec:ratio}:

\begin{corollary}[Ratio limit theorem for~$\Cle(0)$]
\label{cor:ratioCle}
	Suppose that~$\Prob$ has the finite-energy property~\eqref{finiteE} and 
	the boundary-$1$ Markov property, and that~\eqref{limit} holds. Then the 
	limit~$\mu$ in~\eqref{limit} also satisfies
	\[
		\lim_{n\to\infty} \frac{\Prob(|\Cle(0)| = n+1)}{\Prob(|\Cle(0)| = n)} 
		= \lim_{n\to\infty} \frac{\Prob(n+1 \leq |\Cle(0)| < \infty)}{\Prob(n 
		\leq |\Cle(0)| < \infty)} = \mu.
	\]
\end{corollary}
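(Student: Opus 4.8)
The plan is to deduce Corollary~\ref{cor:ratioCle} from
Corollary~\ref{cor:ratio} by showing that the distributions of
$|\Cle(0)|$ and $|C|$ are asymptotically comparable, up to multiplicative
constants that do not affect the ratio limit. The key observation is that
a finite cluster $A$ of size $n$ has a unique left-endpoint $x_A$ in the
lexicographic order, so $\Prob(|\Cle(0)| = n)$ counts exactly those
configurations in which the origin \emph{is} the left-endpoint of its own
cluster and that cluster has size $n$. By translation-invariance of
$\Prob$, summing the event $\{x \text{ is the left-endpoint of } C(x),\
|C(x)| = n\}$ over all possible displacements $x$ recovers the full
probability $\Prob(|C| = n)$, since every occupied cluster is counted once
at its left-endpoint. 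This suggests the exact combinatorial identity that
I would aim to establish first.

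Concretely, I would first write, for each finite cluster shape, the relation
between $\Prob(|C| = n)$ and $\Prob(|\Cle(0)| = n)$. Fix $n$ and consider a
configuration with $|C(0)| = n$; the left-endpoint of $C(0)$ is some site
$-y$ with $y$ ranging over a set of at most $n$ possible relative positions
(the possible offsets of the origin within a cluster of size~$n$). By
translation-invariance, $\Prob(0 \in C,\ |C| = n,\ \text{origin is at relative
position } y) = \Prob(|\Cle(0)| = n,\ \text{origin occupies position } y)$.
Summing over~$y$ gives
$\Prob(|C| = n) = \sum_{y} \Prob(|\Cle(0)| = n,\ \text{offset } y)$, and since
there are exactly $n$ sites in a cluster of size~$n$, the two quantities are
related by a factor that is bounded between~$1$ and~$n$. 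More precisely, I
expect to obtain $\Prob(|C| = n) = n \cdot \Prob(|\Cle(0)| = n)$ by a clean
double-counting argument: summing $\Prob(|\Cle(0)| = n)$ over the $n$
lexicographic translates of a size-$n$ cluster accounts for each occupied
cluster containing the origin exactly once.

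Granting the identity $\Prob(|C| = n) = n\,\Prob(|\Cle(0)| = n)$, the
corollary follows immediately. Indeed,
\[
	\frac{\Prob(|\Cle(0)| = n+1)}{\Prob(|\Cle(0)| = n)}
	= \frac{n}{n+1}\cdot\frac{\Prob(|C| = n+1)}{\Prob(|C| = n)},
\]
and since $n/(n+1) \to 1$, the right-hand side converges to~$\mu$ by
Corollary~\ref{cor:ratio}. The second equality in the statement, involving
the tail probabilities $\Prob(n \leq |\Cle(0)| < \infty)$, follows by the
standard argument that the ratio of consecutive point probabilities and the
ratio of consecutive tail probabilities share the same limit whenever that
limit exists (a summation-by-parts or Cesàro-type argument, exactly as in the
proof of Corollary~\ref{cor:ratio}). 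Since we prove this corollary
simultaneously with Corollary~\ref{cor:ratio}, I would treat both tail-ratio
limits by the same lemma.

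The main obstacle I anticipate is making the double-counting identity fully
rigorous rather than merely heuristic: I must verify that every finite
occupied cluster containing the origin is counted exactly once when I sum
$\Prob(|\Cle(0)| = n)$ over the $n$ lexicographic offsets, and that
translation-invariance is applied correctly to each offset event. This
requires being careful that the left-endpoint map $A \mapsto x_A$ is
well-defined and bijective on the orbit of a cluster under lattice
translations, so that exactly one translate of any size-$n$ cluster has the
origin as its left-endpoint. Once this bijection is pinned down, the factor
$n$ is forced and the remainder of the proof is a direct consequence of
Corollary~\ref{cor:ratio}; no new analytic input beyond the exponential-tail
hypothesis~\eqref{limit} is needed.
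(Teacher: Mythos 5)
Your proposal is correct and follows essentially the same route as the paper: the authors also reduce Corollary~\ref{cor:ratioCle} to the convergence of $c_{n+1}/c_n$ via the identity $\Prob(|C|=n) = n\,\Prob(|\Cle(0)|=n)$ (which they cite as Lemma~4.1 of~\cite{HR06} rather than reproving, though your double-counting sketch is exactly how it is established), and they handle the tail ratios by the same summation argument used for Corollary~\ref{cor:ratio}.
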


The results for~$|\Cmax|$ in~\cite{HR06} hold under further conditions on the 
measure~$\Prob$, that we will formulate now. We start by introducing a 
so-called `high mixing' condition. For $A\subseteq \Z^d$, we write $E_{A}$ for 
an event depending only on the site variables in~$A$. Let $\mathcal{F}_{A}$ 
denote the $\sigma$-field generated by the site variables in~$A$. For $m>0$, 
we define
\begin{equation}
	\label{mixfun}
	\phi(m) = \sup \frac1{|A_1|} \, \bigl| \Prob\left(E_{A_1} \mid E_{A_2} 
	\right) - \Prob\left(E_{A_1}\right) \bigr|,
\end{equation}
where the supremum is taken over all finite subsets $A_1,A_2$ of~$\Z^d$, with 
$d(A_1,A_2)\geq m$ ($d$ denoting Euclidian distance) and over all $E_{A_i} \in 
\mathcal{F}_{A_i}$ with $\Prob(E_{A_2}) > 0$.

Note that this $\phi(m)$ differs from the usual $\varphi$-mixing function 
since we divide by the size of the dependence set of the event~$E_{A_1}$. This 
is natural in the context of Gibbsian random fields, where the classical 
$\varphi$-mixing mostly fails (except for the simplest i.i.d.\ case and ad-hoc 
examples of independent copies of one-dimensional Gibbs measures). We are now 
ready to formulate the non-uniformly exponentially $\phi$-mixing (NUEM) 
condition:

\begin{definition}[NUEM]
	We say that a random field is \emph{non-uniformly exponentially 
	$\phi$-mixing} (NUEM) if there exist constants $C,c>0$ such that
	\begin{equation}
	\label{exponmix}
		\phi(m) \leq C \exp(-c m) \quad \text{for all } m>0.
	\end{equation}
\end{definition}

Examples of random fields satisfying the NUEM condition are Gibbs measures 
with exponentially decaying potential in the Dobrushin uniqueness regime, or 
local transformations of such measures. Of course, for site percolation, where 
we have independence, we have $\phi \equiv 0$. The NUEM condition is weaker 
than the more often used weak mixing (see \cite{Alex98,Alex04} for a 
definition of weak mixing and the related stronger notion of ratio weak 
mixing). Weak mixing holds for (i) the Ising model on $\Z^d$ for supercritical 
temperatures (\cite[Corollary~3.8]{Alex98}); (ii) the Ising model on $\Z^2$ 
for all temperatures and non-zero external field 
(\cite[Corollary~3.8]{Alex98}); (iii) the Potts model on $\Z^2$ with $q\geq 
26$ (\cite[Corollary~3.9]{Alex98} and \cite[Theorem~1.8]{Alex04}); for general 
Potts models on $\Z^d$ under the assumption of exponential decay of 
connectivities and random-cluster uniqueness; (iv) general random-cluster 
measures on $\Z^d$ under the assumption of exponential decay of connectivities 
and random-cluster uniqueness (\cite[Theorem~1.6]{Alex04}). We refer to 
\cite{Alex98, Alex04} and the references therein for a discussion on mixing 
aspects.

In the subcritical case, we shall deal with models where the cluster-size 
distribution has exponential tails, i.e., we shall assume that $\Prob$, the 
law of the random field, satisfies~\eqref{limit} with $\mu<1$. We shall also 
assume a \emph{second moment condition}, which is used in~\cite{HR06} to prove 
the asymptotics of the largest connected component. The precise assumption is 
that for all $\alpha>1$,
\begin{equation}
\label{secmcon}
	\limsup_{n\to\infty} \sum_{0<|x|<n^\alpha}
	\frac{\Prob\bigl( n\leq |\Cle(x)| < \infty,\,
			n\leq |\Cle(0)| < \infty \bigr)}
	{\Prob\bigl( n\leq |\Cle(0)| < \infty \bigr)} < \infty,
\end{equation}
In \cite[Proposition~3.7]{HR06}, it is shown that for Markov models with the 
FKG property, \eqref{secmcon} follows from~\eqref{limit} with $\mu<1$. An 
inspection of the proof of \cite[Proposition~3.7]{HR06} shows that it also 
applies to models with the FKG property satisfying the boundary-$s$ Markov 
property with $s=0$.

Let us now state our results for~$|\Cmax|$. As an implication of 
Corollary~\ref{cor:ratioCle}, we obtain the following result on the 
subcritical maximal cluster:

\begin{theorem}[Subcritical Gumbel maximal cluster]
\label{thm:Gumbel}
	Assume that $\Prob$ has the finite-energy property~\eqref{finiteE}, is 
	NUEM, and satisfies \eqref{limit} with $\mu<1$ and~\eqref{secmcon}. Then 
	there exists a sequence $u_n\in\N$, with $u_n\to\infty$, real numbers $a, 
	\rho>0$ and a bounded sequence $a_n\in[a,1]$, such that for all $x\in\N$,
	\[
		\Prob(|\Cmax| \leq u_n + x) = e^{-a_n \, \mu^x} + O(n^{-\rho}).
	\]
\end{theorem}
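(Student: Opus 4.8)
The plan is to derive Theorem~\ref{thm:Gumbel} as a direct consequence of the
abstract extreme-value result in~\cite{HR06} by verifying that all its
hypotheses are met under the conditions stated here. The central object is the
collection of random variables $\{|\Cle(x)|\}_{x\in B_n}$: since each finite
cluster has exactly one left-endpoint, the maximum $|\Cmax|$ over $B_n$ equals
the maximum of $|\Cle(x)|$ over a slightly enlarged box (up to boundary
corrections that are negligible because of the exponential tail $\mu<1$).
Writing $p_n := \Prob(n\leq|\Cle(0)|<\infty)$, the heuristic is the standard
Poisson-approximation one: the number of sites $x\in B_n$ with $|\Cle(x)|\geq
u_n+x$ should be approximately Poisson with mean $\lambda_n(x) \approx
|B_n|\,p_{u_n+x}$, so that
$\Prob(|\Cmax|\leq u_n+x)\approx \exp(-|B_n|\,p_{u_n+x})$.
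The threshold sequence $u_n\in\N$ is chosen so that $|B_n|\,p_{u_n}$ stays
bounded away from $0$ and $\infty$; this is possible precisely because the
ratio limit theorem Corollary~\ref{cor:ratioCle} forces $p_{n+1}/p_n\to\mu$,
which makes $p_n$ behave like $\mu^n$ up to subexponential factors and hence
pins down $u_n\sim \log|B_n|/\log(1/\mu)$ with $u_n\to\infty$.

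First I would record the two key inputs. The ratio limit theorem for
$\Cle(0)$, Corollary~\ref{cor:ratioCle}, gives
$p_{u_n+x}/p_{u_n}\to\mu^x$ for each fixed $x\in\N$, since
$p_{u_n+x}/p_{u_n}=\prod_{j=0}^{x-1}(p_{u_n+j+1}/p_{u_n+j})$ is a product of
$x$ factors each converging to $\mu$. Setting $a_n := |B_n|\,p_{u_n}$, a
bounded sequence in some interval $[a,1]$ by the choice of $u_n$, one then gets
$|B_n|\,p_{u_n+x}\to a_n\,\mu^x$ in the appropriate sense, which produces the
leading term $e^{-a_n\mu^x}$. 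The second input is a Chen--Stein or
second-moment Poisson-approximation bound, supplied by~\cite{HR06}, whose error
term is controlled by (a) the mixing rate, and (b) the covariance between
nearby clusters. Here the NUEM condition~\eqref{exponmix} controls the
long-range dependence and the second moment condition~\eqref{secmcon} controls
the clustering of large clusters at short range; together these bound the total
variation distance between the count and a Poisson variable, and tracking the
rate through the argument of~\cite{HR06} yields the polynomial error
$O(n^{-\rho})$ for some $\rho>0$.

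Concretely, the steps in order are: (1) reduce $|\Cmax|$ over $B_n$ to a
maximum of independent-looking indicators $\mathbf{1}\{|\Cle(x)|\geq u_n+x\}$,
absorbing the occupied/left-endpoint bookkeeping and the boundary sites into a
$O(n^{-\rho})$ error using $\mu<1$; (2) choose $u_n$ so that $a_n=|B_n|p_{u_n}$
is bounded in $[a,1]$, using the exponential tail to guarantee such a sequence
of integers exists and diverges; (3) apply the extreme-value/Poisson
approximation of~\cite{HR06}, whose hypotheses are exactly the finite-energy
property, NUEM, \eqref{limit} with $\mu<1$, and~\eqref{secmcon}, to obtain
$\Prob(|\Cmax|\leq u_n+x)=\exp(-|B_n|p_{u_n+x})+O(n^{-\rho})$; and (4) invoke
Corollary~\ref{cor:ratioCle} to replace $|B_n|p_{u_n+x}$ by $a_n\mu^x$ inside
the exponential, checking that the resulting substitution error is also
$O(n^{-\rho})$. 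The main obstacle I anticipate is step~(4): one must quantify
the rate of convergence in the ratio limit theorem well enough that
$|\,|B_n|p_{u_n+x}-a_n\mu^x\,|$ is not merely $o(1)$ but genuinely
$O(n^{-\rho})$, uniformly in the bounded range of $x$ of interest. Since
Corollary~\ref{cor:ratioCle} as stated gives only convergence and not an
explicit rate, the honest route is to feed the quantitative statements
from~\cite{HR06} back in, so that the polynomial error there already
encapsulates the needed control; the remaining work is then the careful
bookkeeping that no single reduction contributes an error worse than the
governing $O(n^{-\rho})$.
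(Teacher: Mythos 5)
Your proposal follows essentially the same route as the paper: the paper's entire argument is a reduction to the extreme-value theorem of \cite{HR06} (Theorem~3.6 there), with the only new ingredient being Corollary~\ref{cor:ratioCle}, which verifies the ``subcritical clusters'' hypothesis of \cite[Definition~3.3(i)]{HR06}. Your worry in step~(4) about needing a \emph{rate} in the ratio limit theorem is resolved in the paper simply by observing that Corollary~\ref{cor:ratioCle} forces the two constants $\xi$ and $\zeta$ of \cite[Definition~3.3(i)]{HR06} to coincide, so the strongest version of \cite[Theorem~3.6]{HR06} applies as stated and no further quantitative bookkeeping is required.
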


Theorem~\ref{thm:Gumbel} shows that $|\Cmax|$ is bounded above and below by 
Gumbel laws, and shows in particular that the sequence $|\Cmax| - u_n$ is 
tight. As explained in more detail in~\cite{HR06}, the statement above in 
terms of the sequence~$a_n$ is necessary, and is, for instance, also present 
when dealing with the maximum of $n$ i.i.d.\ geometric random variables.

Examples of random fields for which Theorem~\ref{thm:Gumbel} applies are given 
in the following corollary:

\begin{corollary}{\bf(Examples subcritical Gumbel maximal clusters)}
\label{cor-examplesdubGumbel}
	The conclusions in Theorem~\ref{thm:Gumbel} hold in the following special 
	cases:
	\begin{enumerate}
		\item[(i)] Subcritical percolation;
		\item[(ii)] Subcritical Ising model;
		\item[(iii)] Subcritical random-cluster models satisfying the 
		FKG-property, for which $\mu<1$.
	\end{enumerate}
\end{corollary}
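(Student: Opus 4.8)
The plan is to prove the corollary by verifying, model by model, the hypotheses under which Theorem~\ref{thm:Gumbel} was established: the finite-energy property~\eqref{finiteE}, the NUEM condition, the exponential tail~\eqref{limit} with $\mu<1$, and the second-moment condition~\eqref{secmcon}. Since Theorem~\ref{thm:Gumbel} is obtained as an implication of Corollary~\ref{cor:ratioCle}, I would also record that each model has the boundary-$1$ Markov property, so that Corollary~\ref{cor:ratioCle} is genuinely available. No new arguments are needed: the entire proof consists of citing the relevant structural facts about each model and checking that the hypotheses of the external results are met in the subcritical regime. I would treat the three cases separately, because the justifications for NUEM and for~\eqref{secmcon} take different forms in the independent and dependent settings.

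For subcritical percolation, every hypothesis is immediate. Finite energy holds with $h=\min(p,1-p)$, as noted in the introduction, and percolation is Markov, hence has the boundary-$1$ Markov property. Since the site variables are independent, $\phi\equiv0$ and NUEM holds trivially. The exponential tail~\eqref{limit} with $0<\mu<1$ is the classical subcritical result \cite[Theorems 6.78, 8.61, 8.65]{Grim99}, and~\eqref{secmcon} then follows from \cite[Proposition~3.7]{HR06}, since percolation is Markov and has the FKG property.

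For the subcritical Ising model and for subcritical random-cluster measures with $\mu<1$, finite energy again holds because the single-site conditional probabilities are bounded away from $0$ and $1$, and both models satisfy the boundary-$1$ Markov property (the Ising model because it is Markov, the random-cluster model as recorded in the introduction). The NUEM condition follows from weak mixing, which is strictly stronger: for the Ising model at supercritical temperatures---precisely the subcritical cluster regime---weak mixing is \cite[Corollary~3.8]{Alex98}, while for random-cluster measures it is \cite[Theorem~1.6]{Alex04} under exponential decay of connectivities and random-cluster uniqueness. The tail~\eqref{limit} with $\mu<1$ holds throughout the subcritical Ising phase and is assumed outright in the random-cluster case, its existence following by adapting Lemma~6.12 of~\cite{Grim99} (cf.\ Theorem~5.47 of~\cite{Grim06}). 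For the Ising model,~\eqref{secmcon} is again immediate from \cite[Proposition~3.7]{HR06}.

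The one genuinely delicate point---and the step I expect to be the main obstacle---is~\eqref{secmcon} for the random-cluster model, which fails the nearest-neighbour Markov property~\eqref{Markov} and so lies outside the literal scope of \cite[Proposition~3.7]{HR06}. Here I would invoke the observation recorded in the introduction that the proof of \cite[Proposition~3.7]{HR06} carries over to measures with the FKG property satisfying the boundary-$0$ Markov property; since the random-cluster measure has both, \eqref{secmcon} once more follows from~\eqref{limit} with $\mu<1$. With all four hypotheses verified in each case, Theorem~\ref{thm:Gumbel} applies and the conclusion follows. Beyond this bookkeeping, the only care required is to confirm that the mixing results are invoked under conditions actually present subcritically---in particular exponential decay of connectivities and random-cluster uniqueness---so that weak mixing, and hence NUEM, is indeed in force.
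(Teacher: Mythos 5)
Your proposal is correct and follows essentially the same route as the paper: the paper also treats the corollary as pure hypothesis-checking, citing $\phi\equiv0$ for percolation, weak mixing (hence NUEM) via \cite{Alex98,Alex04} for the Ising and random-cluster models, the classical subcritical tail results for~\eqref{limit}, and the extension of \cite[Proposition~3.7]{HR06} to FKG measures with the boundary-$0$ Markov property to obtain~\eqref{secmcon} in the random-cluster case. You correctly isolate that last point as the only delicate step, exactly as the paper does in its remark following the statement.
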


The assumed FKG-property in Corollary~\ref{cor-examplesdubGumbel}(iii) is 
necessary to ensure that~\eqref{secmcon} applies. See e.g.~\cite{Grim06} for a 
discussion of the FKG-property for random-cluster measures, and \cite[Theorems 
5.55, 5.86]{Grim06} for examples of parameter values for which the 
random-cluster measure satisfies $\mu<1$.

Theorem~\ref{thm:Gumbel} follows from \cite[Theorem~3.6]{HR06}, and, in the 
case of percolation, from \cite[Theorem~1.1]{HR06} combined with 
\cite[Theorem~1.5]{HR06}. We note that in \cite{HR06}, it was assumed that the 
measure $\Prob$ has so-called \emph{subcritical clusters}\footnote{We take 
this opportunity to correct a mistake in Definition~3.3(i) in~\cite{HR06}: the 
condition that $\Prob( |\Cle(0)| < \infty )=1$ should read $\Prob( |C(0)| < 
\infty )=1$. Similarly in Definition~3.3(ii).} \cite[Definition~3.3(i)]{HR06}, 
which is implied by the ratio limit theorem with $\mu<1$, 
Corollary~\ref{cor:ratioCle} above.  In fact, Corollary~\ref{cor:ratioCle} 
implies that $\xi$ and~$\zeta$, defined in \cite[Definition~3.3(i)]{HR06}, are 
equal so that the strongest version in \cite[Theorem~3.6]{HR06} applies.

We now turn to the maximal supercritical cluster, which was investigated 
in~\cite{HR06} only in the context of site percolation, to which we will 
therefore restrict ourselves here as well. For supercritical~$p$, we define
\begin{equation}
\label{meenzbsup}
	|\Cmax| = \max_{x\in B_n: |C(x)| < \infty} |C(x)|
\end{equation}
to be the largest \emph{finite} cluster intersecting the cube. The ratio limit 
theorem implies that, in the language of \cite[Definition~3.3(ii)]{HR06}, 
percolation has supercritical clusters. Therefore, \cite[Theorem~3.9]{HR06} 
implies the following Gumbel statistics for the largest finite supercritical 
cluster:

\begin{theorem}{\bf(Supercritical Gumbel maximal cluster for percolation)}
\label{thm:Gumbelsup}
	Let $p_c<p<1$ and let $\Prob_p$ denote the percolation measure with 
	parameter~$p$. There exists a sequence~$u_n(x)$ with $u_n(x)\in\N$ and 
	$u_n(x)\to\infty$ for all $x\in\R$ as $n\to\infty$, such that for all 
	$x\in\R$,
	\[
		\Prob_p(|\Cmax| \leq u_n(x)) = e^{-e^{-x}} + o(1),
	\]
	where the error term may depend on~$x$.
\end{theorem}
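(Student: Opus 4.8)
The plan is to obtain Theorem~\ref{thm:Gumbelsup} as a direct application of \cite[Theorem~3.9]{HR06}, whose essential hypothesis---that supercritical percolation has \emph{supercritical clusters} in the sense of \cite[Definition~3.3(ii)]{HR06}---I will verify by means of the ratio limit theorem. First I would record that for every $p_c<p<1$ the Bernoulli measure $\Prob_p$ has the finite-energy property~\eqref{finiteE} (with $h=\min(p,1-p)$) and, being a nearest-neighbour Markov field, the boundary-$1$ Markov property. Moreover, by the Wulff-shape results cited in Section~\ref{sec:introduction}, the distribution of the finite cluster of the origin decays like a stretched exponential with exponent $\beta=(d-1)/d$, so in particular \eqref{limit} holds with $\mu=1$. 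Hence the hypotheses of Corollaries~\ref{cor:ratio} and~\ref{cor:ratioCle} are satisfied, and both ratio limit theorems apply with $\mu=1$; the stretched-exponential decay also makes the sharper Corollary~\ref{cor:strongratio} available if a rate is wanted.

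The crucial step is to translate Corollary~\ref{cor:ratioCle} into the language of \cite[Definition~3.3(ii)]{HR06}. Exactly as in the subcritical discussion following Theorem~\ref{thm:Gumbel}, the two quantities $\xi$ and~$\zeta$ appearing in that definition are the lower and upper limits of the ratio $\Prob_p(|\Cle(0)|=n+1)/\Prob_p(|\Cle(0)|=n)$; the ratio limit theorem for~$\Cle(0)$ forces $\xi=\zeta=\mu=1$, which is precisely the assertion that percolation has supercritical clusters. I would then dispatch the remaining inputs of \cite[Theorem~3.9]{HR06}. Since the site variables are independent, the mixing function~\eqref{mixfun} vanishes identically, so the NUEM condition~\eqref{exponmix} holds trivially with $\phi\equiv0$; and the second-moment condition of the type~\eqref{secmcon}, in its version for finite clusters, factorizes over well-separated regions by independence and is summable thanks to the stretched-exponential tail.

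With these verifications in hand, \cite[Theorem~3.9]{HR06} produces a sequence $u_n(x)\in\N$ with $u_n(x)\to\infty$ for each $x\in\R$ such that $\Prob_p(|\Cmax|\leq u_n(x))=e^{-e^{-x}}+o(1)$, which is exactly the claim. I expect the main difficulty to be one of matching conventions rather than of new probabilistic estimates: one must confirm that the finite-cluster object defined in~\eqref{meenzbsup} is the one to which \cite[Definition~3.3(ii)]{HR06} and \cite[Theorem~3.9]{HR06} apply, and that the centering sequence $u_n(x)$ provided by~\cite{HR06} is consistent with the stretched-exponential law of~$|\Cle(0)|$. Beyond the ratio limit theorem proved here and the independence intrinsic to percolation, no further estimate is needed; the substance of the argument lies in checking that, for percolation, the external hypotheses of~\cite{HR06} reduce to the ratio limit theorem.
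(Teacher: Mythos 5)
Your proposal is correct and follows essentially the same route as the paper: the result is obtained by citing \cite[Theorem~3.9]{HR06}, with the only input from the present paper being the ratio limit theorem (Corollary~\ref{cor:ratioCle}), which shows that the quantities $\xi$ and $\zeta$ of \cite[Definition~3.3(ii)]{HR06} coincide, i.e.\ that supercritical percolation has \emph{supercritical clusters}. Your additional verifications (finite energy, boundary-$1$ Markov property, $\phi\equiv0$, second-moment condition) are consistent with the paper's restriction to site percolation and add nothing that conflicts with its argument.
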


\subsection{Further notation used in the proofs}
\label{ssec:notation}

Throughout the rest of the paper, we will write $c_n$ for the probability that 
the cluster of the origin has size~$n$, and $p_n$ for the probability that the 
cluster of the origin is finite and has size at least~$n$. That is, $c_n := 
\Prob( |C|=n )$ and $p_n := \Prob( n\leq|C|<\infty ) = \sum_{m=n}^\infty c_m$.  

\section{Proof of the pattern theorem}
\label{sec:pattern}

This section is devoted to the proof of the pattern theorem, 
Theorem~\ref{thm:pattern}. Our proof is similar to the proof in~\cite{Madr99} 
of a pattern theorem stated in a different context. We recall that the pattern 
theorem states that for a given pattern~$P$, there exists an $a>0$ such that 
it is very unlikely that $P$ occurs at less than $an$ distinct sites of the 
grid $V = (r+2)\Z^d + (1,1,\ldots,1)$ on a cluster of size~$n$.

The proof proceeds roughly as follows. If we assume that one is likely to see 
at most $an$ occurrences of~$P$ on~$C$ at sites of the grid~$V$, then there 
are many sites left on this grid where we can create new occurrences of~$P$ 
on~$C$. By creating a single new occurrence, we may change the size of~$C$, 
but never by  more than $|\overline{Q}| = (r+2)^d$ sites. Consider all 
configurations we can generate by introducing $\delta n$ new occurrences of 
$P$ on~$C$. For every configuration we start from, there is an exponential 
number of ways to introduce $\delta n$ occurrences of $P$ on~$C$, and all 
generated configurations contribute to $c_m$ for some $m \in [n - |Q|\delta n, 
n + |Q|\delta n]$, and hence to $p_{n - |Q| \delta n}$ in particular. However, 
introducing occurrences of~$P$ will change the probability, and it is 
furthermore clear that many of the generated configurations will be obtained 
multiple times from different starting configurations. Still, if the random 
field has the finite-energy property and $\delta$ is small enough, we will 
generate so many distinct configurations that this contribution wins, and this 
will prove Theorem~\ref{thm:pattern}. Now let us fill in the details of the 
proof.

\begin{proof}[Proof of Theorem~\ref{thm:pattern}]
	We want to study the probability that the cluster of the origin has 
	size~$n$, and $P$ occurs on~$C$ at no more than~$an$ distinct sites 
	of~$V$. So, for all $n \geq 1$ define the set of relevant configurations 
	by
	\begin{equation}
		S_n = \{ \omega \in \Omega: |C|=n, N_P \leq an \}.
	\end{equation}
	Consider a configuration $\omega \in S_n$. Then~$C$ has size~$n$, which 
	implies that there are at least $n/|\overline{Q}| =: bn$ extended cubes at 
	sites of~$V$ that intersect~$C$. No more than~$an$ of these cubes can 
	contain an occurrence of~$P$. Thus, assuming $a<b$, with every 
	configuration $\omega \in S_n$ we can associate a collection $X(\omega) 
	\subset V$ of exactly $bn-an$ sites such that the extended cubes at these 
	sites intersect~$C$, and $P$ does not occur at any of these sites.

	For any configuration $\omega \in S_n$ and site $x \in X(\omega)$, we can  
	create an occurrence of~$P$ at~$x$ on~$C$ by vacating all occupied sites 
	of $\partial \overline{Q}_x$ that do not belong to~$C$, occupying all 
	sites of~$\partial Q_x$, and changing the configuration inside~$Q_x$ into 
	an occurrence of~$P$ at~$x$. Here we emphasise that we first vacate sites 
	on the boundary of~$\overline{Q}_x$ to avoid connecting the occupied 
	cluster of the origin to another occupied cluster, whose size we cannot 
	control and might be infinite. Also note that by the finite-energy 
	property~\eqref{finiteE}, the factor by which the probability changes upon 
	a single introduction of~$P$ is bounded from below by a constant of the 
	form $\exp(-A \, |\partial \overline{Q} \cup \overline{Q}|) = \exp\bigl( 
	-A \, (r+4)^d \bigr)$ for some fixed $A>0$.

	Now, for every $\omega \in S_n$, consider  all possible ways of creating 
	occurrences of~$P$ at exactly $\delta n$ sites chosen from the 
	collection~$X( \omega )$, where $\delta > 0$ is a small number that will 
	be fixed later on. Write~$S'_n$ for the collection of all configurations 
	generated in this way. Then in general, the same configuration $\omega' 
	\in S'_n$ can be obtained from multiple configurations in~$S_n$. However, 
	the only differences between these configurations can be the local 
	configurations inside the extended cubes and their boundaries at those 
	sites of~$V$ where~$P$ occurs on the cluster~$C(\omega')$. One can have 
	$q^{| \overline{Q} \cup \partial \overline{Q} |}$ different configurations 
	inside an extended cube plus its boundary, where we recall that~$q$ is the  
	size of the state space~$S$ per site. Also, there are at most 
	$(\delta+a)n$ occurrences of~$P$ on~$C$ if we create $\delta n$ new 
	occurrences.  Therefore, on the one hand,
	\begin{equation}
	\label{pattern1}
		\Prob(S'_n) \geq \binom{bn-an}{\delta n}
		e^{-A \, |\overline{Q} \cup \partial \overline{Q}| \, \delta n} \, 
		q^{-(\delta+a)n \, |\overline{Q} \cup \partial \overline{Q}|} \, 
		\Prob(S_n).
	\end{equation}
	On the other hand it is clear that for any $\omega' \in S'_n$, the 
	occupied cluster of the origin is finite and has size at least $n - |Q| \, 
	\delta n$. Therefore,
	\begin{equation}
	\label{pattern2}
		\Prob(S'_n) \leq p_{n - |Q| \, \delta n}.
	\end{equation}

	We now divide \eqref{pattern1} and~\eqref{pattern2} by~$c_n$, combine the 
	two resulting inequalities, take $n$-th roots on both sides and then the 
	$\limsup_{n \to \infty}$, using~\eqref{limit} and Stirling's formula. This 
	leads to
	\begin{multline}
		\mu^{-|Q| \, \delta} \geq
		\frac{(b-a)^{b-a}}{\delta^\delta \, (b-a-\delta)^{b-a-\delta}} \,
		e^{-A \, |\overline{Q} \cup \partial \overline{Q}| \, \delta} \,
		q^{-(\delta+a) \, |\overline{Q} \cup \partial \overline{Q}|} \\
		\null \times \limsup_{n\to\infty}
		\left[ \Prob_n(N_P \leq an) \right]^{1/n}.
	\end{multline}
	At this point we may just as well take $a=\delta$. Setting $t=\delta / 
	(b-a)$ and $\tilde{\mu} = \mu^{-|Q|} \, e^{A |\overline{Q} \cup \partial 
	\overline{Q}|} \, q^{2|\overline{Q} \cup \partial \overline{Q}|}$, the 
	previous inequality can be rewritten as
	\begin{equation}
		\limsup_{n\to\infty} \left[ \Prob_n(N_P \leq \delta n) \right]^{1/n} 
		\leq \left( t^t \, (1-t)^{1-t} \, \tilde{\mu}^t \right)^{b-\delta}.
	\end{equation}
	The right-hand side is smaller than~$1$ whenever $t = \delta / (b-a) < 
	\tilde{\mu}^{-1}$.  Therefore, the left-hand side is smaller than~$1$ for 
	sufficiently small $\delta>0$, which proves Theorem~\ref{thm:pattern}.
\end{proof}

\section{Proofs of the two-pattern theorems}
\label{sec:twopattern}

In this section we are interested in the ratio between the numbers of 
occurrences of two distinct pattern $P$ and $P'$ on~$C$. We will prove, as 
stated in Theorems \ref{thm:twopattern} and~\ref{thm:strongtwopattern}, that 
if the random field has the boundary-$1$ Markov property, then the ratio 
$N_P/N_{P'}$ must be close to a fixed number $\gamma = \gamma_{PP'}$ defined 
in~\eqref{gamma}. The basic strategy of the proofs is as follows. We will 
consider those configurations such that $N_P-\gamma N_{P'}$ differs from~$0$ 
by at least $\epsilon n^\alpha$ occurrences for some $\epsilon>0$ and 
$0<\alpha\leq1$. Then we can make this difference smaller by either turning 
occurrences of~$P$ into occurrences of~$P'$ or the other way around. By 
deriving a bound on the probability of the collection of configurations 
generated in this way, in a similar way as in the proof of 
Theorem~\ref{thm:pattern}, we can then show that the probability that $|N_P - 
\gamma N_{P'}|$ is at least $\epsilon n^\alpha$ must be small. We start by 
proving Theorem~\ref{thm:twopattern}, in which $\alpha=1$, after which we 
prove Theorem~\ref{thm:strongtwopattern}.

\begin{proof}[Proof of Theorem~\ref{thm:twopattern}]
	We will show that under the conditions stated in the theorem, for every 
	choice of $P$ and~$P'$ (and every possible value of the corresponding 
	$\gamma = \gamma_{PP'}$), and for all $\epsilon>0$,
	\begin{equation}
	\label{twopattern1}
		\limsup_{n\to\infty}
		\big[ \Prob_n( N_P \leq \gamma N_{P'} - \epsilon n) \big]^{1/n}
		\leq \frac{ \bigl( 1+\frac{\epsilon}{1+\gamma}
					\bigr)^{1+(1+\epsilon) \, \gamma^{-1}}}
				{(1+\epsilon)^{(1+\epsilon) \, \gamma^{-1}}}
		< 1.
	\end{equation}
	Since this holds for any choice of the two patterns, we can interchange 
	the roles of $P$ and~$P'$ in~\eqref{twopattern1}, which also replaces 
	$\gamma = \gamma_{PP'}$ by $\gamma^{-1} = \gamma_{P'P}$ and $\epsilon$ by 
	$\epsilon \gamma^{-1}$, to obtain
	\begin{equation}
	\label{twopattern2}
		\limsup_{n\to\infty}
		\big[ \Prob_n( N_P \geq \gamma N_{P'} + \epsilon n) \big]^{1/n}
		\leq \frac{ \bigl( 1+\frac{\epsilon}{1+\gamma}
					\bigr)^{1+\gamma+\epsilon}}
				{\bigl( 1+\frac{\epsilon}{\gamma} \bigr)^{\gamma+\epsilon}}
		< 1.
	\end{equation}
	These two results together evidently imply Theorem~\ref{thm:twopattern}.  
	Hence, it suffices to derive~\eqref{twopattern1}.

	To derive~\eqref{twopattern1}, let $S_n$ be the collection of 
	configurations such that $|C|=n$ and $N_P \leq \gamma N_{P'} - \epsilon 
	n$. For every $\omega \in S_n$, consider all possible ways in which we can 
	change $\delta \, \epsilon n$ occurrences of $P'$ on~$C$ at sites of~$V$ 
	into occurrences of $P$ on~$C$, where $0 < \delta < 1$ will be fixed later 
	on. Then, for given $N_P$ and $N_{P'}$, the ratio of the number of 
	configurations generated in this way to the number of configurations from 
	which they were obtained, is given by the factor
	\begin{equation}
	\begin{split}
	\label{twopatternfactor}
		&\binom{N_P + N_{P'}}{N_P + \delta \, \epsilon n} \,
		\binom{N_P + N_{P'}}{N_P}^{-1} \\
		&\qquad\qquad = \frac{N_P!}{(N_P + \delta \, \epsilon n)!} \, 
		\frac{N_{P'}!}{(N_{P'} - \delta \, \epsilon n)!} \\
		&\qquad\qquad \geq \frac{N_P!}{(N_P + \delta \, \epsilon n)!} \, 
		\frac{(\gamma^{-1} N_P + \gamma^{-1} \, \epsilon n)!}{(\gamma^{-1} N_P 
		+ (\gamma^{-1}-\delta) \, \epsilon n)!},
	\end{split}
	\end{equation}
	where the last inequality follows from $N_P \leq \gamma N_{P'} - \epsilon 
	n$.

	Now we observe (by differentiating with respect to~$N$) that for all 
	$k=1,2,\ldots,\delta \, \epsilon n$, the fraction $(\gamma^{-1}N + 
	(\gamma^{-1} - \delta) \, \epsilon n + k) / (N + k)$ is non-increasing 
	in~$N$. Therefore, using the fact that $N_P$ is necessarily smaller 
	than~$n$, we can bound the factor~\eqref{twopatternfactor} by
	\begin{equation}
	\begin{split}
		&\prod_{k=1}^{\delta \, \epsilon n}
		\frac{\gamma^{-1}N_P + (\gamma^{-1} - \delta) \, \epsilon n + k}
			{N_P + k} \\
		&\qquad\qquad \geq \prod_{k=1}^{\delta \, \epsilon n}
		\frac{\gamma^{-1}n + (\gamma^{-1} - \delta) \, \epsilon n + k}
			{n + k} \\
		&\qquad\qquad = \frac{n!}{(n + \delta \, \epsilon n)!} \,
		\frac{(\gamma^{-1} n + \gamma^{-1} \, \epsilon n)!}
			{(\gamma^{-1} n + (\gamma^{-1}-\delta) \, \epsilon n)!}.
	\end{split}
	\end{equation}

	For every configuration obtained from $\omega\in S_n$ by changing $\delta   
	\, \epsilon n$ occurrences of $P'$ on~$C$ into occurrences of $P$ on~$C$, 
	the cluster of the origin has size $|C| = n - \delta \, \epsilon n \, 
	\delta_c$, where $\delta_c = c_{P'} - c_P$. Moreover, by virtue of the 
	boundary-$1$ Markov property, every change of $P'$ on~$C$ into~$P$ changes 
	the probability of a configuration by the factor $\Prob(\Box P) / 
	\Prob(\Box P')$, where we recall the definition~\eqref{PBox} of 
	$\Prob(\Box P)$. Therefore, we can write
	\begin{multline}
		\frac{c_{n - \delta \, \epsilon n \, \delta_c}}{c_n} \geq
		\left( \frac{\Prob(\Box P)}{\Prob(\Box P')}
		\right)^{\delta \, \epsilon n}
		\frac{n!}{(n + \delta \, \epsilon n)!} \,
		\frac{(\gamma^{-1} n + \gamma^{-1} \, \epsilon n)!}
			{(\gamma^{-1} n + (\gamma^{-1}-\delta) \, \epsilon n)!} \\
		\null \times \Prob_n(N_P \leq \gamma N_{P'} - \epsilon n).
	\end{multline}
	Taking the $n$-th root on both sides and then the $\limsup_{n\to\infty}$, 
	using~\eqref{limit}, \eqref{gamma} and Stirling's formula, leads to
	\begin{equation}
	\label{twopatternf1}
		1 \geq f_\delta(\epsilon) \, \limsup_{n\to\infty}
		\left[ \Prob_n(N_P \leq \gamma N_{P'} - \epsilon n)
		\right]^{1/n},
	\end{equation}
	where
	\begin{equation}
		f_\delta(x) =
		\frac{(1+x)^{(1+x)\gamma^{-1}}}
		{(1 + (1-\delta\gamma) x)^{\gamma^{-1} + (\gamma^{-1}-\delta) x}
			\, (1 + \delta x)^{1 + \delta x}}.
	\end{equation}

	Observe that $f_\delta(0)=1$, and taking the derivative of $f_\delta(x)$ 
	with respect to~$x$ yields
	\begin{equation}
		\frac{f_\delta'(x)}{f_\delta(x)} = \frac{1}{\gamma} \,
		\log\left( \frac{1+x}{1+x-\delta\gamma x} \right)
		+ \delta \, \log\left( \frac{1+x-\delta\gamma x}{1+\delta x} \right).
	\end{equation}
	It follows that for all $x>0$ and $\delta \leq (1+\gamma)^{-1}$, 
	$f_\delta'(x) > 0$ and therefore $f_\delta(x) > 1$. It is not difficult to 
	see that for fixed $x>0$, $f_\delta(x)$ is actually maximal at $\delta = 
	(1+\gamma)^{-1}$. We therefore set $\delta = (1+\gamma)^{-1}$, and then 
	the desired result~\eqref{twopattern1} follows from~\eqref{twopatternf1}.  
	This proves Theorem~\ref{thm:twopattern}.
\end{proof}

\paragraph{Remark.}
The bound appearing on the right-hand side of~\eqref{twopattern1} is not best 
possible in general. Indeed, if we restrict $\epsilon$ to the range 
$(0,\gamma)$, then the combinatorial factor~\eqref{twopatternfactor} can also 
be bounded by
\begin{equation}
\begin{split}
	\frac{N_P!}{(N_P + \delta \, \epsilon n)!} \,
	\frac{N_{P'}!}{(N_{P'} - \delta \, \epsilon n)!}
	&\geq \frac{(\gamma N_{P'} - \epsilon n)!}
			{(\gamma N_{P'} - (1-\delta) \, \epsilon n)!} \,
		\frac{N_{P'}!}{(N_{P'} - \delta \, \epsilon n)!} \\
	&\geq \frac{(\gamma n - \epsilon n)!}
			{(\gamma n - (1-\delta) \, \epsilon n)!} \,
		\frac{n!}{(n - \delta \, \epsilon n)!}.
\end{split}
\end{equation}
Here, as before, the first inequality follows from $N_P \leq \gamma N_{P'} - 
\epsilon n$ and the second is a consequence of the fact that $N_{P'}$ is 
necessarily less than~$n$. The same reasoning as in the previous proof then 
leads us again to an inequality of the form~\eqref{twopatternf1}, where now 
the function $f_\delta(x)$ is given by
\begin{equation}
	f_\delta(x) =
	\frac{(1 - x\gamma^{-1})^{\gamma-x}}
		{(1 - (1-\delta) x\gamma^{-1})^{\gamma - (1-\delta) x} \,
			(1 - \delta x)^{1 - \delta x}}.
\end{equation}
As before, for fixed $0<x<\gamma$, this function is maximal and larger 
than~$1$ at $\delta = (1+\gamma)^{-1}$. This leads for $0<\epsilon<\gamma$ to 
the bound
\begin{equation}
	\limsup_{n\to\infty}
	\big[ \Prob_n(N_P \leq \gamma N_{P'} - \epsilon n) \big]^{1/n}
	\leq \frac{ \bigl( 1-\frac{\epsilon}{1+\gamma}
				\bigr)^{1+\gamma-\epsilon}}
			{\bigl( 1-\frac{\epsilon}{\gamma} \bigr)^{\gamma-\epsilon}}
	< 1.
\end{equation}
This bound is better than the bound in~\eqref{twopattern1} for small values 
of~$\gamma$, but worse than~\eqref{twopattern1} for large values of~$\gamma$.

\begin{proof}[Proof of Theorem~\ref{thm:strongtwopattern}]
	The proof of Theorem~\ref{thm:strongtwopattern} is similar to the proof of 
	Theorem~\ref{thm:twopattern} above. We define the set of relevant 
	configurations by
	\begin{equation}
		S_n = \{ \omega \in \Omega : |C| = n, N_P - \gamma N_{P'}  \geq 
		\epsilon n^\alpha \}.
	\end{equation}
	For every $\omega \in S_n$, we can choose $\delta \, \epsilon n^\alpha$ 
	occurrences of $P$ on~$C$ at sites of~$V$ and turn them into occurrences 
	of~$P'$, where $0 < \delta < 1$ is a number that we will fix later. This 
	will change the size of~$C$ to $n + \delta \, \epsilon n^\alpha \, 
	\delta_c$, since $\delta_c = c_{P'} - c_P$ is the change in~$|C|$ if we 
	replace a single occurrence of $P$ on~$C$ by an occurrence of~$P'$.

	For given $N_P$ and $N_{P'}$, the ratio of the number of generated 
	configurations in which there are $N_P - \delta \, \epsilon n^\alpha$ 
	occurrences of $P$ on~$C$ at sites of~$V$ and $N_{P'} + \delta \, \epsilon 
	n^\alpha$ occurrences of $P'$ on~$C$ at sites of~$V$ to the number of 
	configurations in~$S_n$ from which they can be generated, is, similarly 
	to~\eqref{twopatternfactor}, given by
	\begin{equation}
	\begin{split}
		&\binom{ N_P+N_{P'} }{ N_P - \delta \, \epsilon n^\alpha }
		\binom{ N_P+N_{P'} }{ N_P }^{-1} \\
		&\qquad\qquad = \frac{N_P!}{(N_P - \delta \, \epsilon n^\alpha)!} \,
			\frac{N_{P'}!}{(N_{P'} + \delta \, \epsilon n^\alpha)} \\
		&\qquad\qquad \geq \frac{(\gamma N_{P'} + \epsilon n^\alpha)!}
				{(\gamma N_{P'} + (1-\delta) \epsilon n^\alpha)!} \,
			\frac{N_{P'}!}{(N_{P'} + \delta \, \epsilon n^\alpha)!} \\
		&\qquad\qquad \geq \frac{(\gamma n + \epsilon n^\alpha)!}
				{(n - \delta \, \epsilon n^\alpha)!} \,
			\frac{n!}{(n + \delta \, \epsilon n^\alpha)!}.
	\end{split}
	\end{equation}
	Here, the first inequality follows from $N_P - \gamma N_{P'} \geq \epsilon 
	n^\alpha$, and in the second inequality we used monotonicity of the 
	expression in~$N_{P'}$, which can be shown by an argument similar to the 
	one below~\eqref{twopatternfactor}, together with the fact that $N_{P'} 
	\leq n$. We note that we have assumed that $0 < \delta \leq \gamma^{-1}$ 
	to obtain the second inequality.

	Because for every configuration generated from $\omega\in S_n$, we have 
	that $|C| = n+\delta \, \epsilon n^\alpha \, \delta_c$, we can write
	\begin{multline}
		\frac{c_{n + \delta \, \epsilon n^\alpha \, \delta_c}}{c_n}
		\geq
		\left( \frac{\Prob(\Box P')}{\Prob(\Box P)}
		\right)^{\delta\,\epsilon n^\alpha}
		\frac{(\gamma n + \epsilon n^\alpha)!}
			{(n - \delta \, \epsilon n^\alpha)!} \,
		\frac{n!}{(n + \delta \, \epsilon n^\alpha)!} \\
		\null \times \Prob_n(N_P-\gamma N_{P'} \geq \epsilon n^\alpha).
	\end{multline}
	Using Stirling's formula and substituting~\eqref{gamma} for $\gamma = 
	\gamma_{PP'}$, this expression can be rewritten as
	\begin{multline}
		\frac{c_{n + \delta \, \epsilon n^\alpha \, \delta_c}}{c_n}
		\geq \mu^{\delta \, \epsilon n^\alpha \, \delta_c} \,
		e^{(\gamma^{-1}\delta - \ahalf\gamma^{-1}\delta^2 - \ahalf\delta^2)
			\epsilon n^{2\alpha - 1} + o(n^{2\alpha - 1})} \\
		\null \times \Prob_n(N_P-\gamma N_{P'} \geq \epsilon n^\alpha).
	\end{multline}
	We now choose $\delta = (1 + \gamma)^{-1}$, since this maximizes the 
	middle stretched exponential term on the right-hand side. Raising to the 
	power $n^{1-2\alpha}$ (here we assume $\alpha > \ahalf$) and taking the 
	$\limsup_{n\to\infty}$ then leads to
	\begin{multline}
	\label{strongtwopattern}
		e^{-\ahalf(\gamma+\gamma^2)^{-1} \, \epsilon} \,
		\limsup_{n\to\infty}
		\left[
			\frac{c_{n + \delta \, \epsilon n^\alpha \, \delta_c}}{c_n} \,
			\mu^{-\delta \, \epsilon n^\alpha \, \delta_c}
		\right]^{1/n^{2\alpha-1}} \\
		\geq
		\limsup_{n\to\infty}
		\left[
			\Prob_n(N_P - \gamma N_{P'} \geq \epsilon n^\alpha)
		\right]^{1/n^{2\alpha-1}}.
	\end{multline}
	Observe that the left-hand side in~\eqref{strongtwopattern} is smaller 
	than~$1$ if $\delta_c = 0$,  which immediately proves the first claim of 
	the theorem.

	For $\delta_c \neq 0$ we need a bound on the ratio of cluster-size 
	probabilities appearing on the left-hand side of~\eqref{strongtwopattern}.  
	The exponential decay~\eqref{limit} does not lead to useful bounds, but in 
	the supercritical case, where $\mu=1$, the stretched exponential 
	decay~\eqref{superlimit} tells us that
	\begin{equation}
		\limsup_{n\to\infty} \left[ \frac{c_{n + \delta \, \epsilon n^\alpha 
		\, \delta_c}}{c_n} \right]^{1/n^\beta} = 1.
	\end{equation}
	In~\eqref{strongtwopattern}, this makes the left-hand side smaller 
	than~$1$ if we take $2\alpha - 1 \geq \beta$, that is, $\alpha \geq \ahalf 
	(1+\beta)$. This completes the proof of 
	Theorem~\ref{thm:strongtwopattern}.
\end{proof}

To conclude this section, we strengthen the above bound in the case $\delta_c 
< 0$ under the condition~\eqref{supermulti} of supermultiplicativity. This 
gives
\begin{equation}
	\frac{c_{n - \delta \, \epsilon n^\alpha \, |\delta_c|}}{c_n} \leq \frac1A 
	\, \frac{\delta \, \epsilon n^\alpha \, |\delta_c|}{ c_{\delta \, \epsilon 
	n^\alpha \, |\delta_c|} }.
\end{equation}
Using the stretched exponential decay~\eqref{superlimit}, we obtain from this 
inequality
\begin{equation}
	\limsup_{n\to\infty} \left[ \frac{ c_{n - \delta \, \epsilon n^\alpha \, 
	|\delta_c|} }{ c_n } \right]^{1/n^{\alpha\beta}} \leq e^{\xi (\delta \, 
	\epsilon \, |\delta_c|)^{\beta} }
\end{equation}
for some $0 < \xi < \infty$. We note that $\alpha\beta = 2\alpha-1$ if we take 
$\alpha = (2-\beta)^{-1}$. Then, if we insert the previous expression 
into~\eqref{strongtwopattern}, we see that the left-hand side becomes smaller 
than~$1$ if we take $\epsilon$ larger than some $a_0 > 0$, which proves the 
statement in the Remark following Theorem~\ref{thm:strongtwopattern}.

\section{Proofs of the ratio limit theorems}
\label{sec:ratio}

We shall now show how the pattern theorems proved above can be combined to 
derive the ratio limit theorems (Corollaries \ref{cor:ratio} 
and~\ref{cor:strongratio}). For this, we shall take a fixed cube diameter 
$r=3$, and consider two specific patterns $P$ and~$P'$: $P$ is the pattern 
such that the site $(1,1,\ldots,1)$ is occupied and all other sites of~$Q$ are 
vacant, and $P'$ is the pattern such that the origin is occupied and all other 
sites of~$Q$ are vacant, see Figure~\ref{fig:patterns}.  For these two 
patterns and integers $i,j\geq 0$, we introduce the notation
\begin{equation}
\label{c_n(i,j)}
	c_n(i,j) = \Prob_p(|C|=n, N_P = i, N_{P'} = j).
\end{equation}
Observe that in our earlier notation used to formulate the pattern theorems, 
$\delta_c = c_{P'} - c_P = 1$. For percolation, the patterns $P$ and $P'$ are 
chosen such that whenever we change an occurrence of $P$ on~$C$ into an 
occurrence of $P'$ on~$C$, we do not change the probability of the 
configuration. This is, however, not generally the case for a Markovian random 
field.

\begin{figure}
	\centering
	\begin{picture}(200,66)
		\put(0,0){\includegraphics{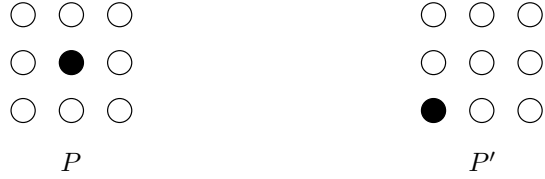}}
		\put(23,2){\makebox(0,0)[b]{\smash{$P$}}}
		\put(177,2){\makebox(0,0)[b]{\smash{$P'$}}}
	\end{picture}
	\caption{\label{fig:patterns}The patterns $P$ and~$P'$ in dimension $d=2$.  
	White sites are vacant, black sites are occupied.}
\end{figure}

Our proofs of the ratio limit theorems are based on the following observation.  
Let $S_n(i,j)$ be the collection of configurations such that $|C|=n$, $N_P = 
i$ and $N_{P'} = j$. Then, for every $\omega \in S_n(i,j)$, there are $i$~ways 
of changing one occurrence of $P$ on~$C$ at a site of~$V$ into an occurrence 
of $P'$ on~$C$, leading to an $\omega' \in S_{n+1}(i-1, j+1)$. For each 
configuration that we generate in this way, there are~$j$ other configurations 
in the set $S_n(i,j)$ from which we could have obtained the same configuration 
by changing one occurrence of $P$ on~$C$ at a site of~$V$ into an occurrence 
of $P'$ on~$C$.  Therefore, for $i\geq1$ and $j\geq0$,
\begin{equation}
\label{changePtoP'}
	c_{n+1}(i-1,j+1) = \frac{i}{j+1} \, \frac{\Prob(\Box P')}{\Prob(\Box P)} 
	\, c_n(i,j),
\end{equation}
where we have again used the boundary-$1$ Markov property. From this equality 
one sees that bounds on the ratio $N_P / N_{P'}$ will lead directly to bounds 
on $c_{n+1} / c_n$. We will now use this to prove Corollary~\ref{cor:ratio} 
for random fields satisfying the boundary-$1$ Markov property.

\begin{proof}[Proof of Corollaries \ref{cor:ratio} and~\ref{cor:ratioCle}]
	First let us show that if $c_{n+1} / c_n$ converges to~$\mu$ and $p_n = 
	\sum_{m=n}^\infty c_m$, then $p_{n+1} / p_n$ converges to~$\mu$ as well.  
	To show this, fix $0 < \epsilon < \mu$. Since $c_{n+1} / c_n$ converges 
	to~$\mu$, there exists an integer $N_\epsilon > 0$ such that
	\begin{equation}
		\left| \frac{c_{n+1}}{c_n}-\mu \right| \leq \epsilon
	\end{equation}
	for all $n > N_\epsilon$. Observing that
	\begin{equation}
		\frac{c_n}{p_n} = \left[ \sum_{m=0}^\infty \frac{c_{n+m}}{c_n} 
		\right]^{-1} = \left[ 1 + \sum_{m=1}^\infty \prod_{k=1}^m 
		\frac{c_{n+k}}{c_{n+k-1}} \right]^{-1},
	\end{equation}
	it follows that for $n > N_\epsilon$, if $\mu < 1$,
	\begin{align}
	\label{bound1}
		\frac{c_n}{p_n} &\leq \left[ \sum_{m=0}^\infty (\mu-\epsilon)^m 
		\right]^{-1} = 1-\mu+\epsilon, \\
		\frac{c_n}{p_n} &\geq \left[ \sum_{m=0}^\infty (\mu+\epsilon)^m 
		\right]^{-1} = 1-\mu-\epsilon.
	\end{align}
	Since $p_{n+1}/p_n = 1-c_n/p_n$, this implies $\lim_{n \to \infty} p_{n+1} 
	/ p_n = \mu$ for every $0 < \mu < 1$. Now note that \eqref{bound1} also 
	applies if $\mu=1$. Since $c_n/p_n \geq 0$ holds trivially, we also obtain 
	$\lim_{n \to \infty} p_{n+1} / p_n = \mu$ when $\mu=1$. Thus, to prove 
	Corollary~\ref{cor:ratio}, it remains to show that $c_{n+1} /c_n$ 
	converges to~$\mu$.

	Now let us write $c^\ast_n = \Prob\bigl( |\Cle(0)| = n \bigr)$ and 
	$p^\ast_n = \sum_{m=n}^\infty c^\ast_m$. We can repeat the argument above 
	(with $c^\ast_n, p^\ast_n$ in place of $c_n, p_n$) to see that 
	$c^\ast_{n+1} / c^\ast_n \to\mu$ implies $p^\ast_{n+1} / p^\ast_n \to\mu$.  
	However, for translation-invariant~$\Prob$, we have that $c_n = n \, 
	c^\ast_n$ for all $n\geq1$ (this is Lemma~4.1 in~\cite{HR06}), so that 
	$c^\ast_{n+1} / c^\ast_n \to\mu$ is implied by $c_{n+1} / c_n \to\mu$. We 
	conclude that establishing that $c_{n+1} / c_n$ converges to~$\mu$ 
	suffices to prove not only Corollary~\ref{cor:ratio} but also 
	Corollary~\ref{cor:ratioCle}.
		
	We will show that $c_{n+1} / c_n$ converges to~$\mu$ for a random field 
	satisfying the boundary-$1$ Markov property by using Theorems 
	\ref{thm:pattern} and~\ref{thm:twopattern}. Let the patterns $P$ and~$P'$ 
	be as above, and let $a>0$ be the constant appearing in 
	Theorem~\ref{thm:pattern} for the pattern~$P'$. Using the 
	notation~\eqref{c_n(i,j)} introduced above, we can write
	\begin{equation}
	\label{ratioeq}
		\frac{c_{n+1}}{c_n} = \sum_{j = -1}^n \sum_{i = 1}^n 
		\frac{c_{n+1}(i-1,j+1)}{c_n}.
	\end{equation}
	We may use Theorems \ref{thm:pattern} and~\ref{thm:twopattern} together 
	with~\eqref{limit}, to restrict the sums in~\eqref{ratioeq} at the cost of 
	introducing an exponentially small error term. For convenience, we write 
	$\gamma = \gamma_{PP'}$ for the constant defined in~\eqref{gamma}.  
	Applying our observation~\eqref{changePtoP'}, we obtain
	\begin{equation}
	\begin{split}
		\frac{c_{n+1}}{c_n}
		&= \sum_{j = an}^n \,
			\sum_{i = \gamma j - o(n)}^{\gamma j + o(n)}
			\frac{c_{n+1}(i-1,j+1)}{c_n} + o(1) \\
		&= \sum_{j = an}^n \,
			\sum_{i = \gamma j - o(n)}^{\gamma j + o(n)}
			\frac{i}{j+1} \, \frac{\Prob(\Box P')}{\Prob(\Box P)} \, 
			\frac{c_n(i,j)}{c_n} + o(1) \\
		&= \sum_{j = an}^n \,
			\sum_{i = \gamma j - o(n)}^{\gamma j + o(n)}
			\big(\gamma + o(1)\big) \, \frac{\Prob(\Box P')}{\Prob(\Box P)} \,
			\frac{c_n(i,j)}{c_n} + o(1) \\
		&= \big(\mu + o(1)\big) \sum_{j = 0}^n \, \sum_{i = 0}^n
			\frac{c_n(i,j)}{c_n} + o(1) \\
		&= \mu+o(1).
	\end{split}
	\end{equation}
	This proves that the ratio $c_{n+1} / c_n$ converges to~$\mu$, which 
	completes the proof of Corollaries \ref{cor:ratio} 
	and~\ref{cor:ratioCle}.
\end{proof}

\begin{proof}[Proof of Corollary~\ref{cor:strongratio}]
	Using the same notations as above, by Theorem~\ref{thm:strongtwopattern} 
	we can write
	\begin{equation}
	\label{strongratioproof}
		\frac{c_{n+1}}{c_n} = \sum_{j = an}^n \, {\sum_i}' \frac{i}{j+1} \,
		\frac{\Prob(\Box P')}{\Prob(\Box P)} \,
		\frac{c_n(i,j)}{c_n} + o\bigl( n^{-(1-\beta)/2} \bigr),
	\end{equation}
	where the prime on the second sum means that $i$ is restricted to run from 
	$\gamma j - \epsilon n^{(1+\beta)/2}$ to $\gamma j + \epsilon 
	n^{(1+\beta)/2}$, with arbitrary $\epsilon>0$. Proceeding as in the proof 
	of Corollary~\ref{cor:ratio}, this implies that
	\begin{equation}
		\left| \frac{c_{n+1}}{c_n} - 1 \right| \leq 
		\frac{2\epsilon}{an^{(1-\beta)/2}}
	\end{equation}
	for sufficiently large~$n$, as required. This is turn implies
	\begin{equation}
		\frac{c_{n+\lfloor xn^\alpha \rfloor}}{c_n}
		= \prod_{k=1}^{\lfloor xn^\alpha \rfloor} \frac{c_{n+k}}{c_{n+k-1}}
		= \left[ 1 + o\bigl( n^{-(1-\beta)/2} \bigr)
			\right]^{\lfloor xn^\alpha \rfloor}
		= 1+o(1)
	\end{equation}
	for all $x>0$ and $0 < \alpha \leq (1-\beta)/2$, establishing 
	Corollary~\ref{cor:strongratio}.
\end{proof}

We close off this section by proving the statement in the Remark below 
Corollary~\ref{cor:strongratio}. By the Remark following 
Theorem~\ref{thm:strongtwopattern}, under~\eqref{supermulti} we may 
restrict~$i$ in the primed sum in~\eqref{strongratioproof} to the range 
$[\gamma j - a_0 n^{1/(2-\beta)},n]$. The same reasoning as before then leads 
to the desired result
\begin{equation}
	\frac{c_{n+1}}{c_n} \geq 1 - \frac{2a_0}{an^{(1-\beta)/(2-\beta)}}.
\end{equation}

\paragraph{Acknowledgements.}
The work of RvdH was supported in part by the Netherlands Organization for 
Scientific Research (NWO). We thank Geoffrey Grimmett for clarifying the proof 
of~\eqref{limit} in the boundary-$0$ Markov case, as well as for pointing us 
to \cite{Alex98, Alex04} on mixing conditions.

\end{document}